\newtheorem{theo}{Theorem}[section]
\newtheorem{defi}[theo]{Definition}
\newtheorem{lemm}[theo]{Lemma}
\newtheorem{prop}[theo]{Proposition}
\newtheorem{rem}[theo]{Remark}
\begin{document}

\title[]{On the convergence of renormalizations of piecewise smooth homeomorphisms on the circle}

\subjclass[2010]{37C05; 37C15; 37E05; 37E10; 37E20; 37B10}


\keywords{Renormalization, Interval exchange map, Rauzy-Veech cocycle, hyperbolicity}

\author{Abdumajid Begmatov and Kleyber Cunha}

\bigskip

\address{Abdumajid Begmatov, \newline  Institute of Mathematics, Academy of Science of the Republic of Uzbekistan, Do'rmon yo'li street 29, Akademgorodok, 100125 Tashkent, Uzbekistan. E-mail: \textcolor{blue}{abdumajidb@gmail.com}}

\bigskip

\address{Kleyber Cunha, \newline Departamento de Matem\'{a}tica, Universidade Federal da Bahia, Av. Ademar de Barros s/n, CEP 40170-110, Salvador, BA, Brazil.  E-mail: \textcolor{blue}{kleyber@ufba.br}}

\begin{abstract}
We study renormalizations of piecewise smooth homeomorphisms on the circle, by considering such maps as generalized interval exchange maps of genus one. Suppose that $Df$ is absolutely continuous on each interval of continuity and $D\ln{Df}\in \mathbb{L}_{p}$ for some $p>1$. We prove, that under certain combinatorial assumptions on $f_{1}$ and $f_{2}$, corresponding renormalizations approach to each other in $C^{1+L_{1}}$-norm.
\end{abstract}

\maketitle

\sloppy

\section{Introduction}

One of the most important problems of theory of dynamical systems is to study regularity of conjugating map between two dynamical systems, which a priory are only topologically equivalent. In the context of circle dynamics topologically equivalence means that for two maps with the same irrational rotation number and the same local structure of their singular points, there exists a circle homeomorphism conjugating these maps. Notice that statements on the regularity of conjugation can be obtained by using the convergence of renormalizations of circle homeomorphisms. In the case of sufficiently smooth circle diffeomorphisms, renormalizations converge exponentially fast to a family of linear maps with slope 1. Such a convergence together with the condition on the rotation number (of Diophantine type) imply the regularity of conjugation (see \cite{KS1989}, \cite{KO1989.1}, \cite{KO1989.2}). Renormalizations of circle homeomorphisms with a break point converge exponentially fast to a family of fractional linear maps \cite{KV1991}. Investigation of the fractional linear maps in \cite{KhKhm2003},
\cite{KT2013} and \cite{KhYam} showed, that the renormalization operator in that space possesses hyperbolic properties. Using convergence of renormalizations of two topologically equivalent circle homeomorphisms with a break point proved, that the conjugation between these maps is $C^{1}$- smooth (see  \cite{KhK2013}, \cite{KhK2014}, \cite{KT2013}).

Circle homeomorphisms with several break points can be considered as generalized interval exchange transformations of genus one. Marmi, Moussa and Yoccoz introduced in \cite{MMY2012} generalized interval exchange transformations, obtained by replacing the affine restrictions of generalized interval exchange transformations in each subinterval with smooth diffeomorphisms. They showed that sufficiently smooth generalized interval exchange transformations of a certain combinatorial type, which are deformations of standard interval exchange transformations and tangent to them at the points of discontinuities, are smoothly linearizable. Cunha and Smania studied in \cite{CS2013} Rauzy-Veech renormalizations of piecewise $C^{2+\nu}$- smooth circle homeomorphisms by considering such maps as generalized interval exchange transformations of genus one. They proved that Rauzy-Veech renormalizations of $C^{2+\nu}$- smooth generalized interval exchange maps satisfying a certain combinatorial condition are approximated by piecewise M\"{o}bius transformations in $C^2$- norm. Using convergence of renormalizations of two generalized interval exchange maps with the same bounded-type combinatorics and zero mean nonlinearities they proved in \cite{CS2014} that these maps $C^{1}$-smoothly conjugate to each other.

We considered in \cite{BDM2014}, \cite{CDB2017} the bottom of the scale of smoothness for a homeomorphism $f$, that is, $Df$ is absolutely continuous on each interval of continuity and $D\log Df\in \mathbb{L}_{p}$ for some $p>1$. The latter conditions on smoothness of $f$, are called the Katznelson and Ornstein's (KO, for short) smoothness condition \cite{KO1989.2}. For this low smoothness case in \cite{CDB2017} it was shown, that the Rauzy-Veech renormalizations of piecewise KO-smooth maps that satisfy certain combinatorial assumptions approach piecewise M\"{o}bius maps in the $C^{1+L_{1}}$ norm. Note that KO smoothness condition is smaller than $C^{2+\nu}$ smooth case considered in \cite{CS2013}, but the obtained convergence rate is slower than exponential.

The purpose of the present work is to study the convergence of renormalizations of two topological equivalent generalized interval exchange maps. We prove that Rauzy-Veech renormalizations of two topological equivalent generalized interval exchange maps of genus one and of KO smoothness approach to each other in $C^{1+L_{1}}$-norm.

The structure of the paper is as follows. In Section 2 we define Rauzy-Veech renormalization of interval exchange maps and define a sequence of dynamical partition associated renormalization map. In Section 3 we formulate our main results. In Section 4 we define Rauzy-Veech cocycle and formulate theorems on the hyperbolicity of the cocycle. Finally in Section 5 we prove our main theorems.

\section{Preliminaries and notations}

\textbf{\large{Rauzy-Veech renormalization.}} This work concerns renormalization of generalizad interval exchange maps. Let $I$ be an open bounded interval and $\mathcal{A}$ be an alphabet with $d\geq 2$ symbols. Consider the partition of $I$ into $d$ subintervals indexed by $\mathcal{A}$, that is, $\mathcal{P}=\{I_{\alpha},\,\, \alpha\in \mathcal{A}\}$. Let $f: I\rightarrow I$ be a bijection. We say that the triple $(f, \mathcal{A}, \mathcal{P})$ is a \textbf{generalized interval exchange map} with $d$ intervals (for short g.i.e.m.), if $f|_{I_\alpha}$ is an orientation-preserving homeomorphism for all $\alpha\in\mathcal{A}.$ Here and later, all intervals will be bounded, closed on the left and open on the right.

If $f|_{I_\alpha}$ is a translation and $|f(I_{\alpha})|=|I_{\alpha}|$, then $f$ is called a \textbf{standard interval exchange map} (for short s.i.e.m.). When $d=2$, identifying the endpoints of $I$, s.i.e.m. correspond to linear rotations of the circle and g.i.e.m. correspond to circle homeomorphisms.

Now we formulate some conditions on the combinatorics for g.i.e.m and define the renormalization scheme. Note that the combinatorial conditions and the renormalization scheme are the same for generalized and standard i.e.m. cases.

The order of the subintervals $I_{\alpha}$ before and after the map, constitutes the combinatorial data for $f$, which will be explicitly defined as follows.

Given two intervals $J$ and $U$, we will write $J <U$, if their interiors are disjoint and $x<y$, for every $x\in J$ and $y\in U$. This defines a partial order in the set of all intervals.

Let $f: I\rightarrow I$ be a g.i.e.m. with alphabet $\mathcal{A}$ and $\pi_0,\, \pi_1:\mathcal{A}\rightarrow \{1, . . . , d\}$, be bijections such that
$$
\pi_0(\alpha)<\pi_0(\beta),\,\,\,\,\,\, \mbox{\rm iff} \,\,\,\,\,\,\,\,I_{\alpha}<I_{\beta},
$$
and
$$
\pi_1(\alpha)<\pi_1(\beta),\,\,\,\,\,\, \mbox{\rm iff} \,\,\,\,\,\,\,\,f(I_{\alpha})<f(I_{\beta}).
$$

We call pair $\pi=(\pi_0, \pi_1)$ the \textbf{combinatorial data} associated to the g.i.e.m. $f$. We call $p=\pi_1^{-1}\circ\pi_0: \{1,...,d\}\rightarrow \{1,...,d\}$ the \textbf{monodromy invariant} of the pair $\pi=(\pi_0, \pi_1)$. When appropriate we will also use the notation $\pi = (\pi(1),\, \pi(2),\,...,\pi(d))$ for the combinatorial data of $f$. We always assume that the pair $\pi=(\pi_0, \pi_1)$ is \textbf{irreducible}, that is, for all $j\in \{1,...,d-1\}$ we have: $\pi_0^{-1}({1, . . . , j})\neq \pi_1^{-1}({1, . . . , j})$.

Let $\pi=(\pi_0, \pi_1)$ be the combinatorial data associated to the g.i.e.m $f$. For each $\varepsilon\in \{0, 1\}$, denote by $\alpha(\varepsilon)$ the last symbol in the expression of $\pi_{\varepsilon}$, that is \, $\alpha(\varepsilon)=\pi^{-1}_{\varepsilon}(d)$.

Let us assume that the intervals $I_{\alpha(0)}$ and $f(I_{\alpha(1)})$ have different lengths. Then the g.i.e.m. $f$ is called \textbf{Rauzy-Veech renormalizable}(renormalizable, for short). If $|I_{\alpha(0)}|>|f(I_{\alpha(1)})|$ we say that $f$ is renormalizable of \textbf{type} $\textbf{0}$. When $|I_{\alpha(0)}|<|f(I_{\alpha(1)})|$ we say that $f$ is renormalizable of \textbf{type} $\textbf{1}$.  In either case, the letter corresponding to the largest of these intervals is called \textbf{winner} and the one corresponding to the shortest is called the \textbf{loser} of $\pi$. Let $I^{(1)}$ be the subinterval of $I$ obtained by removing the loser, that is, the shortest of these two intervals:
$$
I^{(1)}=\left\{\begin{array}{ll}
I\setminus f(I_{\alpha(1)}),\,\,\, \mbox{\rm if} \,\,\,\,\, \mbox{\rm type 0}, \\
I\setminus I_{\alpha(0)},\,\,\, \mbox{\rm if} \,\,\,\,\, \mbox{\rm
type 1}.
\end{array}\right.
$$

Since the loser is the last subinterval on the right of $I$, the intervals $I$ and $I^{(1)}$ have the same left endpoint.

The \textbf{Rauzy-Veech induction} of $f$ is the first return map $R(f)$ to the subinterval $I^{(1)}$. We want to see $R(f)$ is again g.i.e.m. with the same alphabet $\mathcal{A}$. For this we need to associate to this map an $\mathcal{A}$ - indexed partition of its domain. Denote by $I^{(1)}_{\alpha}$ the subintervals of $I^{(1)}$. Let $f$ be renormalizable of type $0$. Then the domain of $R(f)$ is the interval $I^{(1)}=I\setminus f(I_{\alpha(1)})$ and
we have
\begin{equation}\label{Itype0}
I^{(1)}_{\alpha}=\left\{\begin{array}{ll}
I_{\alpha},\,\,\,\,\,\,\, \mbox{\rm for} \,\,\,\,\, \alpha\neq\alpha(0), \\
I_{\alpha(0)}\setminus f(I_{\alpha(1)}),\,\,\, \mbox{\rm for}
\,\,\,\,\, \alpha=\alpha(0).
\end{array}\right.
\end{equation}
These intervals form a partition of the interval $I^{(1)}$ and denoted by $\mathcal{P}^{(1)}=
\{I^{(1)}_{\alpha},\, \alpha\in \mathcal{A}\}$. Since $f(I_{\alpha(1)})$ is the last interval on the right of
$f(\mathcal{P})$, we have $f(I^{(1)}_{\alpha})\subset I^{(1)}$ for every $\alpha\neq \alpha(1)$. This means that, $R(f):=f$ restricted to these $I^{(1)}_{\alpha}$. On the other hand, due to $I^{(1)}_{\alpha(1)}=I_{\alpha(1)}$, we have
$$
f\left(I^{(1)}_{\alpha(1)}\right)=f\left(I_{\alpha(1)}\right)\subset I_{\alpha(0)},\,\,\,\, \mbox{\rm and so} \,\,\,\,
f^2\left(I^{(1)}_{\alpha(1)}\right)\subset f\left(I_{\alpha(0)}\right)\subset I^{(1)}.
$$
Then $R(f):=f^2$ restricted to $I^{(1)}_{\alpha(1)}$. Thus,
\begin{equation}\label{Rtype0}
R(f)(x)=\left\{\begin{array}{ll}
f(x),\,\,\,\,\, \mbox{\rm if} \,\,\,\,\, x\in I^{(1)}_{\alpha}\,\,\,\text{ and } \alpha\neq\alpha(1), \\
f^2(x),\,\,\,\, \mbox{\rm if} \,\,\,\,\, x\in I^{(1)}_{\alpha(1)}.
\end{array}\right.
\end{equation}

If $f$ is renormalizable of type $1$, the domain of $R(f)$ is the interval \ $I^{(1)}=I\setminus I_{\alpha(0)}$ and we have
\begin{equation}\label{Itype1}
I^{(1)}_{\alpha}=\left\{\begin{array}{lll}
I_{\alpha},\,\,\,\,\,\,\, \mbox{\rm for} \,\,\,\,\, \alpha\neq\alpha(0), \alpha(1), \\
f^{-1}(I_{\alpha(0)}),\,\,\, \mbox{\rm for} \,\,\,\,\,
\alpha=\alpha(0), \\
I_{\alpha(1)}\setminus f^{-1}(I_{\alpha(0)}),\,\,\, \mbox{\rm for}
\,\,\,\,\, \alpha=\alpha(1).
\end{array}\right.
\end{equation}
Then $f\left(I^{(1)}_{\alpha}\right)\subset I^{(1)}$ for every $\alpha\neq \alpha(0)$, and so $R(f)=f$ restricted to these $I^{(1)}_{\alpha}$. On the other hand,
$$
f^2\left(I^{(1)}_{\alpha(0)}\right)= f(I_{\alpha(0)})\subset I^{(1)},
$$
and, so $R(f)=f^2$ restricted to $I^{(1)}_{\alpha(0)}$. Thus,
\begin{equation}\label{Rtype1}
R(f)(x)=\left\{\begin{array}{ll}
f(x),\,\,\,\,\, \mbox{\rm if} \,\,\,\,\, x\in I^{(1)}_{\alpha}\,\,\,\text{ and } \alpha\neq\alpha(0), \\
f^2(x),\,\,\,\, \mbox{\rm if} \,\,\,\,\, x\in I^{(1)}_{\alpha(0)}.
\end{array}\right.
\end{equation}
It is easy to see, that $R(f)$ is a bijection on $I^{(1)}$ and an orientation-preserving homeomorphisms on each $I^{(1)}_{\alpha}$. Moreover, the alphabet $\mathcal{A}$  for $f$ and $R(f)$ remains the same.

The triple $(R(f), \mathcal{A}, \mathcal{P}^1)$ is called the \textbf{Rauzy-Veech renormalization} of $f$. If $f$ is
renormalizable of type $\varepsilon\in \{0, 1\}$, then the combinatorial data $\pi^1=(\pi_0^1, \pi_1^1)$ of $R(f)$ are given by
$$
\pi_{\varepsilon}^1:=\pi_{\varepsilon},\,\,\,\,\,\, \mbox{\rm
and}\,\,\,\,\,\,\,
\pi_{1-\varepsilon}^1(\alpha)=\left\{\begin{array}{lll}
\pi_{1-\varepsilon}(\alpha),\,\,\,\,\, \mbox{\rm if} \,\,\,\,\, \pi_{1-\varepsilon}(\alpha)\leq \pi_{1-\varepsilon}(\alpha(\varepsilon)), \\
\pi_{1-\varepsilon}(\alpha)+1,\,\,\,\,\, \mbox{\rm if} \,\,\,\,\, \pi_{1-\varepsilon}(\alpha(\varepsilon))< \pi_{1-\varepsilon}(\alpha)<d, \\
\pi_{1-\varepsilon}(\alpha(\varepsilon))+1,\,\,\,\,\, \mbox{\rm if}
\,\,\,\,\, \pi_{1-\varepsilon}(\alpha)=d.
\end{array}\right.
$$

We say that a g.i.e.m. $f$ is \textbf{infinitely renormalizable}, if $R^{n}(f)$ is well defined for every $n\in \mathbb{N}$. Let $I^{(n)}$ be the domain of $R^{n}(f)$. It is clear that, $R^{n}(f)$ is the first return map for $f$ to the interval $I^{(n)}$. Similarly, $R^{n}(f)^{-1}=R^{n}(f^{-1})$ is the first return map for $f$ to the
interval $I^{(n)}$.

For every interval of the form $J=[a, b)$ we put $\partial J:=\{a\}$.
\begin{defi}
We say that g.i.e.m. $f$  \textbf{has no connection}, if
\begin{equation}\label{Keane}
f^m(\partial I_{\alpha})\neq \partial I_{\beta},\,\,\,\,\,\,\,
\mbox{\it for all} \,\,\,\,\,\, m\geq 1\,\,\,\, \mbox{\it
and}\,\,\,\,\, \alpha,\, \beta\in\mathcal{A} \,\,\,\,\,\,\,\,\,
\mbox{\it with} \,\,\,\,\,\, \pi_{0}(\beta)\neq 1.
\end{equation}
\end{defi}

It is clear that in case $\pi_0(\beta)=1$ then $f(\partial I_{\alpha})=\partial I_{\beta}$ for $\alpha=\pi_1^{-1}(1)$. Condition (\ref{Keane}) is called  the \textbf{Keane condition}. Keane \cite{Ke1975} showed that the no connection condition is a necessary and sufficient condition for $f$ to be infinitely renormalizable. Condition (\ref{Keane}) means that the orbits of the left end point of the subintervals $I_{\alpha},\,\alpha\in\mathcal{A}$ are disjoint when ever they can be.

Let $\varepsilon_{n}$ be the type of the $n$-th renormalization and let $\alpha_n(\varepsilon_{n})$ the winner and
$\alpha_n(1-\varepsilon_{n})$ be the loser of the $n$-th renormalization.

\begin{defi}
We say that g.i.e.m. $f$ has $k$- \textbf{bounded combinatorics},
if for each $n\in \mathbb{N}$ and $\beta,\, \gamma\in\mathcal{A}$
there exist $n_1, p\geq0$ with $|n-n_1|<k$ and $|n-n_1-p|<k$ such
that
$$\alpha_{n_1}(\varepsilon_{n_1})=\beta,\,
\alpha_{n_1+p}(1-\varepsilon_{n_1+p})=\gamma,\,\,\, \mbox{\it and}
$$
$$
\alpha_{n_1+i}(1-\varepsilon_{n_1+p})=\alpha_{n_1+i+1}(\varepsilon_{n_1+i}),\,\,\,\,\,\,\,\,
\mbox{\it for every} \,\,\,\,\,\, 0\leq i<p.
$$
\end{defi}

We say that g.i.e.m. $f: I\rightarrow I$ has \textbf{genus one}(or belongs to the \textbf{rotation class}), if $f$ has at most two discontinuities. Note that every g.i.e.m. with either two or three intervals has genus one. The genus of g.i.e.m. is invariant under renormalization.

\textbf{\large{Dynamical partition.}} Using Rauzy-Veech induction we define dynamical partition of the interval $I$. Let $(f, \mathcal{A}, \mathcal{P})$ be a g.i.e.m. with $d$ intervals and $\mathcal{P}=\{I_{\alpha}:
\alpha\in \mathcal{A}\}$ be the initial $\mathcal{A}$- indexed partition of $I$. For specificity we take $I=[0, 1)$. Suppose that $f$ is infinitely renormalizable. Let $I^{(n)}$ be the domain of $R^n(f)$. Note that $I^{(n)}$ is the nested sequence of subintervals, with the same left endpoint of $I$. We want to construct the dynamical partition of $I$ associated to the domain of $R^n(f)$.

As mentioned above, $R(f)$ is g.i.e.m. with $d$ intervals and the intervals $I^{(1)}_{\alpha}$ generate an $\mathcal{A}$- indexed partition of $I^{(1)}$, denoted by $\mathcal{P}^1$. By induction one can check, that $R^n(f)$ is g.i.e.m. with $d$ intervals. Let $\mathcal{P}^{n}=\{I^{(n)}_{\alpha}: \alpha\in \mathcal{A}\}$ be the $\mathcal{A}$- indexed partition of $I^{(n)}$, generated by $R^{n}(f)$. We call $\mathcal{P}^{n}$ the {\bf fundamental partition} and $I^{(n)}_{\alpha}$ the \textbf{fundamental segments} of rank $n$.

Since $R^n(f)$ is the first return map for $f$ to the interval $I^{(n)}$, each fundamental segment $I^{(n)}_{\alpha}\in \mathcal{P}^{n}$ returns to $I^{(n)}$ under certain iterates of the map $f$. Until returning, these intervals will be in the interval $I\setminus I^{(n)}$ for some time. Consequently the system of
intervals (their interiors are mutually disjoint)
$$
\xi_n=\left\{ f^{i}(I^{(n)}_{\alpha}),\,\, 0\leq i\leq q_{\alpha}^{n}-1,\,\, \alpha\in \mathcal{A}\right\}
$$
cover the whole interval and form a partition of $I$.

The system of intervals $\xi_n$ is called the \textbf{n-th dynamical partition} of $I$. The dynamical partitions $\xi_n$ are refined with increasing $n$, where $\xi_{n+1}\supset\xi_{n}$ means that
any element of the preceding partition is a union of a number of elements of the next partition, or belongs to the next partition. Denote by $\xi^{pr}_{n+1}$ the system of preserved intervals of $\xi_{n}$. More precisely, if $R^nf$ has type 0
$$\xi^{pr}_{n+1}=\left\{ f^{i}(I^{(n)}_{\alpha}),\,\, 0\leq i\leq q^{n}_{\alpha}-1,\,\, \mbox{~for~} \alpha\not=\alpha(0)\right\},$$
and if $R^nf$ has type 1

$$\xi^{pr}_{n+1}=\left\{ f^{i}(I^{(n)}_{\alpha}),\,\, 0\leq i\leq q^{n}_{\alpha}-1,\,\, \mbox{~for~} \alpha\not=\alpha(1)\right\}.$$

Let $\xi^{tn}_{n+1}:=\xi_{n+1}\setminus \xi^{pr}_{n+1}$ be the set of elements of $\xi_{n+1}$ which are properly contained in some element of $\xi_n.$ Therefore if $R^nf$ has type 0

\begin{eqnarray*}
\xi^{tn}_{n+1}&=&\left\{ f^i(I^{(n+1)}_{\alpha(0)}),\,\, 0\leq i< q^{n}_{\alpha(0)}\right\}\bigcup \left\{ f^i(I^{(n+1)}_{\alpha(1)}),\,\, 0\leq i< q^{n}_{\alpha(0)}\right\}\\
&=&\bigcup_{i=0}^{q^{n}_{\alpha(0)}-1}\left\{f^i\left(I^{(n)}_{\alpha(0)}\setminus f^{q^n_{\alpha(1)}}I^{(n)}_{\alpha(1)}\right)\right\}\bigcup \bigcup_{i=q^n_{\alpha(1)}}^{q^n_{\alpha(1)}+q^n_{\alpha(0)}-1}\left\{ f^i(I^{(n)}_{\alpha(1)})\right\},
\end{eqnarray*}

and if $R^nf$ has type 1

\begin{eqnarray*}
	\xi^{tn}_{n+1}&=&\left\{f^i(I^{(n+1)}_{\alpha(0)}),\,\, 0\leq i< q^{n}_{\alpha(1)}\right\}\bigcup\left\{f^i(I^{(n+1)}_{\alpha(1)}),\,\, 0\leq i< q^{n}_{\alpha(1)}\right\}\\
	& = & \bigcup_{i=0}^{q^n_{\alpha(1)}-1}\left\{f^i\left(f^{-q^n_{\alpha(1)}}(I^{(n)}_{\alpha(0)})\right)\right\}\bigcup \bigcup_{i=0}^{q^n_{\alpha(1)}-1}\left\{f^i\left(I^{(n)}_{\alpha(1)}\setminus f^{-q^n_{\alpha(1)}}(I^{(n)}_{\alpha(0)})\right)\right\}.
\end{eqnarray*}

So, the partition $\xi_{n+1}$ consists of the \emph{preserving} elements of $\xi_{n}$ and the \emph{images of two (new) intervals} for defining $R^{n+1}(f)$, that is, $\xi_{n+1}=\xi^{pr}_{n+1}\cup \xi^{tn}_{n+1}$. Note also that for the first return time $q^{n}_{\alpha}$, we have:
\begin{itemize}
\item[(1)] if \ $\alpha=\alpha^{n}(\varepsilon)$, then \ $q^{n+1}_{\alpha^{n}(\varepsilon)}=q^{n}_{\alpha^{n}(\varepsilon)}$;
\item[(2)] if \ $\alpha=\alpha^{n}(1-\varepsilon)$, then \ $q^{n+1}_{\alpha^{n}(1-\varepsilon)}=q^{n}_{\alpha^{n}(1-\varepsilon)}+q^{n}_{\alpha^{n}(\varepsilon)}$.
\end{itemize}

\bigskip

The following lemma will be used in the proof of our main results in Section 5.

\begin{lemm}\label{extra} Let $\{r_{n}\}\in l_2$ be a  sequence of positive numbers and $\lambda\in (0, 1)$. Set
$$
x_n=\sum\limits_{j=n}^{\infty}\lambda^{j-n}r_j,\,\,\,\, y_{n}=\sum\limits_{j=0}^{n-1}\lambda^{j}r_{n+j},\,\,\,\,\,\,\, z_{n}=\sum\limits_{j=0}^{n-1}\lambda^{j}r_{n-j}.
$$
Then the sequences $\{x_{n}\}, \{y_{n}\}$ and $\{z_{n}\}$ belong to $l_2$.
\end{lemm}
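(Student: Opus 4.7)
The plan is to handle all three sequences by the same device: write the geometric weight as $\lambda^j=\lambda^{j/2}\cdot\lambda^{j/2}$ and apply the Cauchy--Schwarz inequality, then swap the order of summation (Fubini) and use that $\sum_{j\ge 0}\lambda^j=(1-\lambda)^{-1}$.

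For $x_n$, after re-indexing $k=j-n$ one has $x_n=\sum_{k=0}^{\infty}\lambda^k r_{n+k}$. Cauchy--Schwarz gives
$$x_n^{2}\le\left(\sum_{k=0}^{\infty}\lambda^{k}\right)\left(\sum_{k=0}^{\infty}\lambda^{k}r_{n+k}^{2}\right)=\frac{1}{1-\lambda}\sum_{k=0}^{\infty}\lambda^{k}r_{n+k}^{2}.$$
Summing over $n\ge 0$ and interchanging the order of summation,
$$\sum_{n=0}^{\infty}x_n^{2}\le\frac{1}{1-\lambda}\sum_{k=0}^{\infty}\lambda^{k}\sum_{n=0}^{\infty}r_{n+k}^{2}\le\frac{\|r\|_{\ell_2}^{2}}{(1-\lambda)^{2}},$$
so $\{x_n\}\in\ell_2$.

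For $y_n$, simply truncating the outer series at $j=n-1$ and repeating the Cauchy--Schwarz step yields $y_n^{2}\le(1-\lambda)^{-1}\sum_{j=0}^{n-1}\lambda^{j}r_{n+j}^{2}$, so the identical Fubini computation gives $\|y\|_{\ell_2}^{2}\le (1-\lambda)^{-2}\|r\|_{\ell_2}^{2}$. For $z_n$, writing $k=n-j$ turns the sum into $z_n=\sum_{k=1}^{n}\lambda^{n-k}r_{k}$, which is the discrete convolution of $\{\lambda^{n}\}\in\ell_1$ with $\{r_n\}\in\ell_2$; by Young's convolution inequality $\|z\|_{\ell_2}\le\|\lambda^{\cdot}\|_{\ell_1}\|r\|_{\ell_2}=(1-\lambda)^{-1}\|r\|_{\ell_2}$. (Equivalently, the same Cauchy--Schwarz plus Fubini argument works verbatim after reversing the summation index.)

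There is no real obstacle here, only index bookkeeping; the whole content is the Cauchy--Schwarz split of $\lambda^{j}$ together with Fubini's theorem for non-negative sequences, plus the convergence of the geometric series. The positivity of the $r_n$ is not actually needed for the estimate, only for the application of Fubini with non-negative terms, which is automatic.
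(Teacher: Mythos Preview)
Your argument is correct and follows essentially the same route as the paper: the Cauchy--Schwarz split $\lambda^{j}=\lambda^{j/2}\cdot\lambda^{j/2}$ followed by interchanging the order of summation is exactly what the paper does for $x_n$, with the same bound $(1-\lambda)^{-2}\|r\|_{\ell_2}^{2}$. Two minor differences: for $y_n$ the paper simply observes $y_n\le x_n$ termwise rather than repeating the Cauchy--Schwarz step, and for $z_n$ the paper carries out the Fubini swap explicitly (writing out the triangular array and summing along diagonals) instead of invoking Young's convolution inequality; your use of Young is a cleaner packaging of the same computation and yields the identical bound.
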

\begin{proof} Using H\"{o}lder's inequality for sum we get
\begin{equation}\label{xn2}
x^{2}_n=\left(\sum\limits_{j=n}^{\infty}\lambda^{\frac{j-n}{2}}\cdot\lambda^{\frac{j-n}{2}}r_j\right)^{2}\leq \left(\sum\limits_{j=n}^{\infty}\lambda^{j-n}\right)\cdot\left(\sum\limits_{j=n}^{\infty}\lambda^{j-n}r^{2}_j\right)=
\frac{1}{1-\lambda}\sum\limits_{j=n}^{\infty}\lambda^{j-n}r^{2}_j.
\end{equation}
An elementary calculations shows that
$$
\sum\limits_{n=1}^{\infty}x^{2}_n=\frac{1}{1-\lambda}\sum\limits_{n=1}^{\infty}\sum\limits_{j=n}^{\infty}\lambda^{j-n}r^{2}_{j}=
\frac{1}{1-\lambda}\left(\sum\limits_{n=1}^{\infty}\lambda^{n-1}\right)\left(\sum\limits_{n=1}^{\infty}r^{2}_{n}\right)\leq \frac{1}{(1-\lambda)^{2}}\sum\limits_{n=1}^{\infty}r^{2}_{n}.
$$
The last inequality and condition $\{r_{n}\}\in l_2$ imply that $\{x_{n}\}\in l_{2}$. It is easy to check that $y_{n}\leq x_{n}$. Consequently, we have $\{y_{n}\}\in l_{2}$. Using analogously simplifications as in the inequality (\ref{xn2}) we get: $z^{2}_{n}\leq (1-\lambda)^{-1}u_{n}$, where $u_{n}=\sum\limits_{j=1}^{n}\lambda^{n-j}r^{2}_{j}$. \, In order to estimate the series $\sum\limits_{n=1}^{\infty}u_{n}$ we pay attention to the sequence $u_{n}$:

$\,\,\,\,\,\,\,\,\, u_{1}=r^{2}_{1}$

$\,\,\,\,\,\,\,\,\, u_{2}=\lambda r^{2}_{1}+r^{2}_{2}$

$\,\,\,\,\,\,\,\,\, u_{3}=\lambda^{2} r^{2}_{1}+\lambda r^{2}_{2}+r^{2}_{3}$

$\,\,\,\,\,\,\,\,\,\,\,\,\,\,\,\,\,\,  \cdot \cdot \cdot \cdot \cdot \cdot \cdot \cdot \cdot \cdot \cdot \cdot \cdot$

$\,\,\,\,\,\,\,\,\, u_{n}=\lambda^{n-1} r^{2}_{1}+\lambda r^{n-2}_{2}+\cdot\cdot\cdot+\lambda r^{2}_{n-1}+r^{2}_{n}$

$\,\,\,\,\,\,\,\,\,\,\,\,\,\,\,\,\,\,  \cdot \cdot \cdot \cdot \cdot \cdot \cdot \cdot \cdot \cdot \cdot \cdot \cdot \cdot \cdot \cdot$\\
Summarizing the right sides of the last expressions by diagonal we get
$$
\sum\limits_{n=1}^{\infty}u_{n}\leq \frac{1}{1-\lambda}\sum
\limits_{n=1}^{\infty}r^{2}_{n},\,\,\,\,\,\, \mbox{\rm and consequently}\,\,\,\,\,\, \sum\limits_{n=1}^{\infty}z^{2}_{n}\leq\frac{1}{(1-\lambda)^{2}}\sum\limits_{n=1}^{\infty}r^{2}_{n},
$$
The last inequality and condition $\{r_{n}\}\in l_2$ imply that $\{z_{n}\}\in l_2$.
\end{proof}

\section{Previous results and the statement of the main theorems}

Denote by $\mathbb{B}^{KO}$  the set of g.i.e.m. satisfying the following conditions:
\begin{itemize}

\item[(i)] the map $f$ has genus one (cyclic permutation);

\item[(ii)] the map $f$ has no connection and has $k$- bounded combinatorics;

\item[(iii)] for each $\alpha\in \mathcal{A}$ we can extend $f$ to $\overline{I}_{\alpha}$ as an orientation-preserving diffeomorphism satisfying Katznelson and Ornstein's(KO, for short) smoothness condition: $f'$ is absolutely continuous and $f''\in L_{p}$, for some $p>1$.
\end{itemize}

Denote by $\mathbb{B}^{KO}_{\star}$  the subset of functions $f\in \mathbb{B}^{KO}$ satisfying \textbf{zero mean nonlinearity} condition:
$$
\int\limits_{[0, 1]}\frac{f''(t)}{f'(t)}dt=0.
$$

The main idea of the renormalization group method is to study the behaviour of the renormalization map $R^{n}(f)$ as $n\rightarrow\infty$. For this usually rescaling of the coordinates is used.

For any homeomorphism $g: I\rightarrow J$ define the \textbf{Zoom}(or renormalized coordinate) $Z_{H}(g)=\tau^{-1}\circ g \circ \tau$,\,
where $\tau: [0, 1]\rightarrow I$ is orientation-preserving affine map. Define the fractional linear transformations $F_{n}: [0, 1]\rightarrow [0, 1]$ as follows:
\begin{equation}\label{Fn}
F_{n}(x)=\frac{xm_{n}}{1+x(m_{n}-1)},\,\,\,\, \mbox{\rm where} \,\,\,\,\, m_{n}=\exp\{-\sum\limits_{i=0}^{q^{\alpha}_{n}-1}\int_{I^{(n)}_{\alpha}}\frac{f''(t)}{2f'(t)}dt\}.
\end{equation}

Whenever necessary, we will use $D^{m}f$ instead of the $m^{th}$ derivative of $f$. The following result states about the behaviour of renormalization of a g.i.e.m.

\begin{theo}\label{main1}(see \cite{CDB2017}) $(1)$ Let $f\in \mathbb{B}^{KO}$. Then for all $\alpha\in
\mathcal{A}$ the following bounds hold:
$$
\|Z_{I^{(n)}_{\alpha}}(R^{n}(f))-F_{n}\|_{C^1[0,
1])}\leq \delta_{n},\,\,\,\,\,\,\,\,
\|Z_{I^{(n)}_{\alpha}}(D^2R^{n}(f))-D^2F_{n}\|_{L_1([0,
1], d\ell)}\leq \delta_{n}.
$$
$(2)$ Let $f\in \mathbb{B}^{KO}_{\star}$. Then for all $\alpha\in
\mathcal{A}$ the following bounds hold:
$$
\|Z_{I^{(n)}_{\alpha}}(R^{n}(f))-Id\|_{C^1[0, 1])}\leq
\delta_{n},\,\,\,\,\,\,\,\,
\|Z_{I^{(n)}_{\alpha}}(D^2R^{n}(f))\|_{L_1([0, 1], d\ell)}\leq
\delta_{n},
$$
where $\delta_{n}=\mathcal{O}(\lambda^{\sqrt{n}}+\eta_{n}),\,\, \lambda\in (0, 1)$ and $\eta_{n}\in l_{2}$.
\end{theo}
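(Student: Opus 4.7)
The plan is to analyze $R^n(f)|_{I^{(n)}_\alpha} = f^{q^n_\alpha}|_{I^{(n)}_\alpha}$ through its nonlinearity $N(g):=g''/g'$ after rescaling, and compare it with the nonlinearity of the fractional linear map $F_n$. Möbius maps on $[0,1]$ fixing the endpoints form the one-parameter family $\{F_m\}$ and are characterized by nonlinearities of the form $-2(m-1)/(1+x(m-1))$; the $C^1$ and $L_1$ bounds both follow from $L_1$-closeness of the nonlinearity of the renormalized map to that of $F_n$, since $(Z_{I^{(n)}_\alpha}(R^n f))'$ is bounded away from $0$ and $\infty$ by the same arguments.

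The first step is the cocycle formula
\[
N(f^{q})(x) = \sum_{i=0}^{q-1} N(f)(f^i(x))\,(f^i)'(x),
\]
together with the rescaling identity $N(Z_{I^{(n)}_\alpha}(R^n f))(y) = |I^{(n)}_\alpha|\cdot N(f^{q^n_\alpha})(\tau_n(y))$ for the affine parametrization $\tau_n:[0,1]\to I^{(n)}_\alpha$. Under the change of variable $t = f^i(\tau_n(y))$, each summand on the right becomes a weighted average of $N(f)$ over the piece $f^i(I^{(n)}_\alpha)$ of the dynamical partition $\xi_n$. Applying a uniform bounded distortion lemma for $f^{q^n_\alpha}$ (whose proof uses the Keane condition, the KO regularity and the $k$-bounded combinatorics), each summand is approximated by the integral $\int_{f^i(I^{(n)}_\alpha)} f''/f'\,dt$, with an $L_1$-residual controlled per piece by the local $L_p$-modulus of $N(f)$ through a H\"older argument.

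To obtain the rate $\delta_n = \mathcal{O}(\lambda^{\sqrt n}+\eta_n)$, one invokes the hyperbolicity of the Rauzy-Veech cocycle formulated in Section 4: for $k$-bounded combinatorics in genus one it yields exponential contraction $|I^{(n)}_\alpha|\leq C\mu^n$ with $\mu<1$. The standard balancing trick splits the orbit sum at index $\sqrt n$, so that the deterministic factor produces the $\lambda^{\sqrt n}$ term, while the nonlinearity-weighted tail produces a sequence $\eta_n$ whose $\ell^2$-membership is forced by the $L_p$ integrability of $N(f)$ combined with summation estimates of exactly the type of Lemma \ref{extra} (applied to $r_n := \|N(f)\|_{L_p(J_n)}$ for $J_n$ the union of $n$-th level orbit pieces). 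For part (2), the zero-mean condition $\int_0^1 f''/f'\,dt = 0$ combined with $f$-invariance of Lebesgue measure modulo an $L_p$-small error gives $m_n\to 1$, collapsing $F_n$ to $\mathrm{Id}$ with the discrepancy absorbed into $\delta_n$.

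The main obstacle is the uniform bounded distortion estimate for $f^{q^n_\alpha}$ on $I^{(n)}_\alpha$ with constants independent of $n$. In the $C^{2+\nu}$ setting of \cite{CS2013} this is a direct Denjoy computation, but under KO regularity the distortion sums $\sum_i |f^i(I^{(n)}_\alpha)|$ do not telescope by the chain rule alone, so one must interpolate between the $L_p$-tail of the nonlinearity and the geometric contraction coming from the cocycle; this is precisely where the exponent $p>1$ and the combinatorial boundedness interact. Once the distortion lemma is in place, the rest reduces to bookkeeping that combines H\"older's inequality, the Rauzy-Veech exponent $\mu$, and Lemma \ref{extra} to repackage the resulting bounds into the $\lambda^{\sqrt n}+\eta_n$ form.
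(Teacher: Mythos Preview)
The paper does not prove Theorem~\ref{main1}; it is quoted verbatim from \cite{CDB2017} as a previously established result, so there is no ``paper's own proof'' to compare your sketch against. Your outline is therefore an attempt to reconstruct the argument of \cite{CDB2017}, and as such it identifies the right ingredients: the nonlinearity cocycle formula, rescaling, bounded distortion, H\"older, and the $\sqrt{n}$-splitting that produces the characteristic $\lambda^{\sqrt n}+\eta_n$ rate.

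That said, a few points in the sketch would not survive as written. The remark that ``$f$-invariance of Lebesgue measure modulo an $L_p$-small error'' drives $m_n\to 1$ in part~(2) is off: these maps do not preserve Lebesgue measure, and the zero-mean-nonlinearity condition acts instead through the telescoping of $\sum_i \int_{f^i(I^{(n)}_\alpha)} N(f)\,dt$ over a full dynamical partition, which equals $\int_0^1 N(f)\,dt=0$ up to the error coming from the few towers not fully used. Also, the exponential decay $|I^{(n)}_\alpha|\le C\mu^n$ is not a direct consequence of the Rauzy--Veech cocycle statements in Section~4 (those concern the action on $\mathbb{R}^{\mathcal A}$, not the nonlinear map on length vectors); in \cite{CDB2017} the geometric bounds on $|I^{(n)}_\alpha|$ are obtained separately from a Denjoy-type argument adapted to the KO class and $k$-bounded combinatorics. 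Finally, Lemma~\ref{extra} in this paper is used for the \emph{subsequent} results (Lemmas~\ref{lstab}--\ref{lunstab}), not for Theorem~\ref{main1} itself; the $\ell_2$-sequence $\eta_n$ in Theorem~\ref{main1} arises in \cite{CDB2017} from a different summation estimate tied to the $L_p$ modulus of continuity of $N(f)$.
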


The reasons for the restriction "k-bounded combinatorics" in the condition $(ii)$ are technical. We conjecture that the statement of Theorem \ref{main1} holds for all combinatorics keeping no connection and genus one conditions.

Our main results in the present paper are the following

\begin{theo}\label{main3} Let $f\in \mathbb{B}^{KO}$. Then there exist a sequence of positive numbers $\{\delta_{n}\}\in l_{2}$ and an affine i.e.m. $(f_{A}, \mathcal{A}, \{\widetilde{I}_{\alpha}\}_{\alpha\in \mathcal{A}})$, that is, $f_{A}|_{\widetilde{I}_{\alpha}}$ is affine for each $\alpha\in
\mathcal{A}$ such that
\begin{itemize}
\item[(i)]  $f_{A}$ has the same combinatorics of $f$;
\item[(ii)] $\|R^{n}f-R^{n}f_{A}\|_{C^1([0, 1])}\leq \delta_{n},\,\,\,\, \|D^{2}R^{n}f-D^{2}R^{n}f_{A}\|_{L_1([0, 1], d\ell)}\leq \delta_{n}$.
\end{itemize}
\end{theo}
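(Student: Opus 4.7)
My plan is to construct $f_A$ as the affine interval exchange map with the same combinatorial data as $f$ whose slopes are tuned, via the hyperbolicity of the Rauzy-Veech cocycle from Section~4, to match the asymptotic derivative profile of $R^n f$ up to an $l_2$ error. With $f_A$ so chosen, the two estimates in (ii) will follow by combining Theorem~\ref{main1} applied to $f$ with a direct computation for the piecewise affine $R^n f_A$, and a triangle inequality on each piece of the common combinatorial partition.

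First I would take $f_A$ to have the combinatorial data $\pi$ of $f$ and interval lengths $|\widetilde{I}_{\alpha}|=|I_{\alpha}|$. This keeps the Rauzy-Veech induction in lockstep on both maps, producing the same sequence of types $\varepsilon_n$, winners and losers, and, in particular, the same cocycle matrices $B_n$ acting on log-slopes in $\mathbb{R}^{\mathcal{A}}$. For an affine i.e.m.\ the log-slope vector of $R^n f_A$ on its $\alpha$-piece is exactly $(B_n \vec{\mu}^A)_\alpha$, where $\vec{\mu}^A\in\mathbb{R}^{\mathcal{A}}$ is the initial log-slope vector of $f_A$. On the side of $f$, Theorem~\ref{main1} identifies the derivative of the Zoom of $R^n f$ at the endpoints of $I^{(n)}_\alpha$ with the M\"obius parameters $m_n^{\pm 1}$ up to an error in $l_2$; this furnishes a target vector $\vec{s}_n=(\log m_n^{\alpha})_\alpha$ that I want to fit.

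Next I would choose $\vec{\mu}^A$ by solving the cohomological equation
\[
B_n\,\vec{\mu}^A \;=\; \vec{s}_n \;+\; \vec{r}_n,\qquad n\geq 0,
\]
while keeping $\vec{r}_n\in l_2$. The hyperbolicity of the Rauzy-Veech cocycle stated in Section~4 provides a splitting of $\mathbb{R}^{\mathcal{A}}$ into stable and unstable subspaces with exponential contraction at rate $\lambda^n$ for some $\lambda<1$. The unstable component of $\vec{\mu}^A$ is then pinned down by a backward-convergent series in $\vec{s}_n$, and the stable component is fixed so that the forward residuals stay $l_2$. The series that appear are of the form
\[
\sum_{j\geq n}\lambda^{j-n}\eta_j,\qquad \sum_{j=0}^{n-1}\lambda^{j}\eta_{n\pm j},
\]
which stay in $l_2$ by Lemma~\ref{extra} applied to the $l_2$ sequence $\eta_j$ coming from Theorem~\ref{main1}.

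With $\vec{\mu}^A$ defined, $R^n f_A$ is an affine i.e.m.\ whose slope on the $\alpha$-piece equals $\exp((B_n\vec{\mu}^A)_\alpha)$, matching the left-endpoint derivative of $R^n f$ up to $l_2$. On the common combinatorial partition, $R^n f$ is within $\delta_n$ of the M\"obius map $F_n$ in $C^1$ by Theorem~\ref{main1}, and $F_n$ --- once rescaled to the small piece and corrected to match the slope of $R^n f_A$ at the left endpoint --- differs from $R^n f_A$ by a quantity again bounded by the slope-matching residual and by $\|D^2 F_n\|_{L_1}$, both of which are $l_2$. Summing over pieces and applying the triangle inequality yields the $C^1$ bound. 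For the $D^2$ bound, since $D^2 R^n f_A\equiv 0$ on each piece, the estimate reduces to $\|D^2 R^n f\|_{L_1}$, which is controlled by Theorem~\ref{main1}.

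The hard part is setting up and solving the cohomological equation: combining (i) the hyperbolic splitting of the Rauzy-Veech cocycle from Section~4, (ii) the $l_2$ error $\eta_n$ in Theorem~\ref{main1}, and (iii) the summation estimates of Lemma~\ref{extra}, in such a way that $\vec{\mu}^A$ exists and leaves an $l_2$-summable residual $\vec{r}_n$. A secondary technical issue is to control the mismatch between the dynamical partitions of $f$ and $f_A$, which must itself be shown to be $l_2$-small so that the piecewise comparison is valid on the common reference interval $[0,1]$.
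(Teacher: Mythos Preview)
Your outline captures the overall architecture --- build an affine model by shadowing the log-derivative data through the Rauzy--Veech cocycle, then compare via Theorem~\ref{main1} and Lemma~\ref{extra} --- but it has a genuine structural gap: you treat the cocycle as if it were uniformly hyperbolic with only a stable/unstable splitting. In genus one with $d\geq 3$ intervals this is false. As Section~4 develops, $\mathrm{Im}\,\Omega_\pi$ is two-dimensional and carries the hyperbolic behavior (Proposition~\ref{unfhyper}), but there is a $(d-2)$-dimensional \emph{central} direction $E^{c}_{n,\infty}$ on which the cocycle is only a quasi-isometry (Proposition~\ref{quasi}). The paper's construction hinges on this: it sets $L^{n}_\alpha=\tfrac{1}{|I^{n}_\alpha|}\int_{I^{n}_\alpha}\ln Df_n$, shows $L^{n}$ is a pseudo-orbit for the cocycle (Proposition~\ref{pseudo-orbit}), decomposes $L^{n}=L^{s}_n+L^{c}_n+L^{u}_n$, proves the stable and unstable parts are $l_2$-small (Lemmas~\ref{lstab},~\ref{lunstab}), and then takes the slope vector $\omega$ of $f_A$ as the limit of $\Theta_{0,n-1}^{-1}L^{c}_n$ in $E^{c}_{0,\infty}$ (Lemma~\ref{limvector}). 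Your cohomological equation $B_n\vec\mu^A=\vec s_n+\vec r_n$ cannot be solved by stable/unstable bookkeeping alone: on the central piece neither forward nor backward sums converge geometrically, and this is exactly where the nontrivial information (the break sizes of $f_A$) lives.

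Two further points. First, you cannot prescribe both the interval lengths $|\widetilde I_\alpha|=|I_\alpha|$ and the log-slope vector $\vec\mu^A$ independently: by Lemma~\ref{uniqaffine} the combinatorics together with the slope vector determine $f_A$ uniquely, so the lengths are forced. The paper accepts this and controls the resulting partition mismatch separately (Lemmas~\ref{difpartition},~\ref{distancepartition}); your ``lockstep'' picture is not available. Second, your $D^2$ argument (``reduces to $\|D^2R^nf\|_{L_1}$'') fails for $f\in\mathbb{B}^{KO}$ without the zero-mean-nonlinearity hypothesis, since Theorem~\ref{main1}(1) only gives closeness to the M\"obius profile $D^2F_n$, which is not small; the paper's comparison goes through the Zoom against $R^n f_A$ using the matched slope data, not through $D^2R^nf\to 0$.
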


\begin{theo}\label{main4}(Universality). Suppose that $f$ and $g$ satisfy the assumptions of Theorem \ref{main3} and they have the same
combinatorics, and they are break-equivalents. Then one can choose that $f_{A}=g_{A}$.
\end{theo}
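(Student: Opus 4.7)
The plan is to extract $f_A$ and $g_A$ from the proof of Theorem~\ref{main3} as explicit functionals of the break data and combinatorics of $f$ and $g$, and then to invoke break-equivalence to conclude they coincide. Since renormalization sends affine i.e.m.'s to affine i.e.m.'s, the maps $R^n f_A$ and $R^n g_A$ are piecewise affine with parameters evolving under the (multiplicative) Rauzy-Veech cocycle acting on the slope-length data. My goal is to show that the Rauzy-Veech orbits of the parameters of $f_A$ and of $g_A$ agree for all $n$, which by invertibility of the cocycle reduces to agreement at the initial step and hence $f_A = g_A$.

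I would first identify the length vector $(|\widetilde{I}_\alpha|)_{\alpha\in\mathcal{A}}$ of $f_A$ as the stable-manifold limit of the length data of $\{R^n f\}$: the $C^1$-bound $\|R^n f - R^n f_A\|_{C^1}\le\delta_n$ of Theorem~\ref{main3}, together with the hyperbolicity of the length cocycle from Section~4, determines the length vector intrinsically from $f$ up to a term summable in $l_2$. Since $f$ and $g$ have the same combinatorics and both live on $[0,1)$, their length data are carried by the same sequence of Rauzy-Veech matrices, so these limits coincide. Next, I would identify the slope vector $(s_\alpha)_{\alpha\in\mathcal{A}}$ of $f_A$. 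The log-slope of $R^n f$ on $I^{(n)}_\alpha$ decomposes as a Birkhoff sum of $\log Df$, which by the KO regularity splits into an absolutely continuous part $\sum_{i=0}^{q_\alpha^n-1}\int(f''/f')\,dt$ and a singular jump part coming from the break points. Under break-equivalence the singular jump contributions along each orbit segment coincide for $f$ and $g$ (same break sizes at corresponding orbit positions, dictated by the common combinatorics); under same combinatorics the smooth contributions differ only by an $L_1$-coboundary along the dynamical partition, which by Theorem~\ref{main1} and Lemma~\ref{extra} yields a sequence in $l_2$ vanishing in the limit. Hence the asymptotic slope data agree, and the affine parameters of $f_A$ and $g_A$ coincide.

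The main technical obstacle is the smooth nonlinearity contribution: since $f''/f'$ is only in $L_p$, a pointwise Birkhoff treatment along the orbit is unavailable, and one must instead compare the $L_1$-norms of $D^2 R^n f$ and $D^2 R^n f_A$ using Theorem~\ref{main1}(1), and then telescope along the nested partitions $\xi_n$. The resulting double sum has exactly the structure handled by Lemma~\ref{extra}, with $r_n=\delta_n\in l_2$ and $\lambda\in(0,1)$ the contraction rate of the hyperbolic Rauzy-Veech cocycle from Section~4. Once this $l_2$-error is absorbed into the length and slope limits, the break-equivalence hypothesis completes the matching of the singular parts of the log-slope cocycles for $f$ and $g$, yielding $f_A=g_A$.
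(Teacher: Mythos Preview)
Your proposal has a genuine gap at the very first step: you treat $f_A$ as an explicit functional of the break data and combinatorics of $f$, but the paper explicitly remarks (just after Lemma~\ref{uniqaffine}) that the weak affine model is \emph{not} unique. The slope vector $\omega$ may be perturbed by any vector in $E^{s}_{0}$ and still yield a valid affine model satisfying the conclusion of Theorem~\ref{main3}. So there is no hope of showing $f_A=g_A$ by arguing that each is separately determined by its source map; the theorem asserts only that one can \emph{choose} $f_A=g_A$, and the proof must isolate a canonical choice and then show break-equivalence forces equality for that choice.

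The paper's argument is structurally quite different from yours. It fixes the canonical slope vectors $\omega^{f},\omega^{g}\in E^{c}_{0,\infty}$ produced by Lemma~\ref{limvector}, and then uses Lemma~\ref{breakaffine} to translate break-equivalence of $f$ and $g$ into the relation $\omega^{f}_{i+1}-\omega^{f}_{i}=\omega^{g}_{i+1}-\omega^{g}_{i}$ for all $i$ except at the single discontinuity and at $d$. This forces $\omega^{f}-\omega^{g}$ to be piecewise constant with only two values, i.e.\ it is (the lift of) a slope vector for an affine i.e.m.\ with \emph{two} intervals. The decisive step, entirely absent from your sketch, is the dimensional fact $\dim E^{c}_{0,\infty}(2)=0$: the central direction of the Rauzy--Veech cocycle for two-interval exchanges (circle rotations) is trivial, so $\omega^{f}-\omega^{g}=0$. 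Finally Lemma~\ref{uniqaffine} converts equality of slope vectors (with common combinatorics) into $f_A=g_A$; note that the length vectors are \emph{determined} by the slope vector and the combinatorics via that lemma, not identified independently as in your outline. Your Birkhoff-sum and coboundary heuristics do not substitute for this algebraic reduction, and in particular your claim that ``the smooth contributions differ only by an $L_1$-coboundary'' has no content here since $f$ and $g$ are unrelated maps with merely the same combinatorics.
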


The next theorem is a consequence of Theorem \ref{main3} and \ref{main4}.

\begin{theo}\label{main5} Let $f, g\in \mathbb{B}^{KO}_{\star}$ be such that
\begin{itemize}
\item[(i)]  $f$ and $g$ have the same combinatorics;
\item[(ii)] $f$ and $g$ are break equivalent.
\end{itemize}
Then there exists a sequence of positive numbers $\{\delta_{n}\}\in l_{2}$ such that
$$
\|R^{n}f-R^{n}g\|_{C^1([0, 1])}\leq \delta_{n},\,\,\,\, \|D^{2}R^{n}f-D^{2}R^{n}g\|_{L_1([0, 1], d\ell)}\leq \delta_{n}.
$$
\end{theo}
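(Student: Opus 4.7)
The plan is to derive Theorem \ref{main5} as a short corollary of Theorems \ref{main3} and \ref{main4} via the triangle inequality, using a single affine i.e.m.\ as a universal intermediary. Since $\mathbb{B}^{KO}_{\star}\subset\mathbb{B}^{KO}$, both $f$ and $g$ satisfy the hypotheses of Theorem \ref{main3}, so I would first apply that theorem to each of them separately to produce affine i.e.m.s $f_A$ and $g_A$ (with the same combinatorics as $f$ and $g$ respectively), together with two sequences $\{\delta_n^{(f)}\}, \{\delta_n^{(g)}\}\in l_2$ controlling $\|R^n f-R^n f_A\|_{C^1}$, $\|D^2 R^n f-D^2 R^n f_A\|_{L_1}$, and the analogous quantities for $g$.

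Next, I would invoke Theorem \ref{main4}. Its hypotheses are precisely met: $f$ and $g$ satisfy the assumptions of Theorem \ref{main3}, they share the same combinatorics by (i), and they are break-equivalent by (ii). Therefore one may choose the affine approximants so that $f_A=g_A$. Denote this common affine i.e.m.\ by $h$. Since $h$ is a single genuine affine map, its renormalizations $R^n h$ form one sequence, and both $R^n f_A$ and $R^n g_A$ coincide with $R^n h$ pointwise, so that the differences $R^n f_A-R^n g_A$ and $D^2 R^n f_A-D^2 R^n g_A$ vanish identically.

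The conclusion then follows from the triangle inequality applied simultaneously in both norms. I would write
\begin{align*}
\|R^n f-R^n g\|_{C^1([0,1])} &\leq \|R^n f-R^n h\|_{C^1([0,1])}+\|R^n h-R^n g\|_{C^1([0,1])}\leq \delta_n^{(f)}+\delta_n^{(g)},
\end{align*}
and analogously for the $L_1$-norm of the second-derivative difference. Setting $\delta_n:=\delta_n^{(f)}+\delta_n^{(g)}$ and using that $l_2$ is closed under addition produces the required error sequence.

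Given this layout, there is essentially no additional obstacle inside the proof of Theorem \ref{main5} itself: the heavy technical content has been absorbed into Theorem \ref{main3} (existence of a $C^{1+L_1}$ affine approximant along the renormalization orbit, relying on the hyperbolicity of the Rauzy-Veech cocycle and the $l_2$ refinement extracted from KO regularity via \textbf{Lemma \ref{extra}}) and Theorem \ref{main4} (universality of the affine approximant within a break-equivalence class). The one point I would verify carefully before applying the triangle inequality is that the identifications of $R^n f$, $R^n g$ and $R^n h$ as maps of $[0,1]$ all use compatible affine rescalings of the domain $I^{(n)}$; this compatibility is built into the constructions of $f_A$ and $g_A$ in Theorem \ref{main3}, but it has to be noted explicitly so that the common $h = f_A = g_A$ serves as a valid intermediary in both norms.
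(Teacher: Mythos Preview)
Your proposal is correct and matches the paper's own proof essentially verbatim: the paper also invokes Theorem \ref{main4} to get $f_A=g_A$ and then applies the triangle inequality together with the bounds from Theorem \ref{main3}. Your extra remark about compatible rescalings is a reasonable sanity check but is not needed beyond what the paper already assumes.
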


\smallskip

Note that the KO smoothness condition is smaller than $C^{2+\nu}$ smoothness considered in \cite{CS2013}. But the rate of approximations in Theorems \ref{main5} is not exponential, contrary to the $C^{2+\nu}$ smoothness case. We believe this rate of approximations will be suffice to prove the absolute continuity of conjugating map between $f$ and $g$.

\section{Rauzy--Veech cocycle}

The sequence of renormalizations $R^{n}(f)$ can also be considered as the action of Rauzy--Veech cocycle $\Theta^{n}$ on the space of i.e.m. which will be defined in this section. We follow the presentations of \cite{CS2014} and \cite{Viana}.

Consider an interval exchange map $(f,\, \mathcal{A},\, \{I_{\alpha}\})$. Denote by $\lambda=(\lambda_{\alpha})_{\alpha\in \mathcal{A}}$ the lengths of subintervals $\lambda_{\alpha}:=|I_{\alpha}|$. It is well-known that any standard interval exchange maps $f$ is uniquely defined by combinatorial data $\pi=(\pi_{0},\, \pi_{1})$ and by length vector $\lambda=(\lambda_{\alpha})_{\alpha\in \mathcal{A}}$. Moreover, the corresponding standard i.e.m. $f$ \, is given by
$$
f(x)=x+\omega_{\alpha},\,\,\,\, x\in I_{\alpha},
$$
where
$$
\omega_{\alpha}=\sum\limits_{\pi_{1}(\beta)<\pi_{1}(\alpha)}\lambda_{\beta}- \sum\limits_{\pi_{0}(\beta)<\pi_{0}(\alpha)}\lambda_{\beta}.
$$
We call $\omega=\{\omega_{\alpha}\}_{\alpha\in \mathcal{A}}$ the \textbf{translation vector} of $f$. Note that between length vector $\lambda=(\lambda_{\alpha})_{\alpha\in \mathcal{A}}$ and translation vector $\omega=\{\omega_{\alpha}\}_{\alpha\in \mathcal{A}}$ there is relation: $\omega=\Omega_{\pi}(\lambda)$, where the antisymmetric matrix $(\Omega_{\alpha,\, \beta})_{\alpha,\, \beta\in \mathcal{A}}$ of $\Omega_{\pi}$ is given by
$$
\Omega_{\alpha, \beta}= \left\{\begin{array}{ll}
+1,\,\,\,\,  \mbox{\rm if} \,\,\,\,\, \pi_{1}(\alpha)>\pi_{1}(\beta)\,\,\,\,\, \mbox{\rm and}\,\,\, \pi_{0}(\alpha)<\pi_{0}(\beta), \\
-1,\,\,\,\,  \mbox{\rm if} \,\,\,\,\, \pi_{1}(\alpha)<\pi_{1}(\beta)\,\,\,\,\, \mbox{\rm and}\,\,\, \pi_{0}(\alpha)>\pi_{0}(\beta), \\
\,\, 0,\,\,\,\,\, \mbox{\rm otherwise}.
\end{array}\right.
$$

Denote by $\Pi^{1}$ the set of all possible genus one irreducible combinatorial data $\pi=(\pi_{0},\, \pi_{1})$. The \textbf{Rauzy--Veech cocycle} is the linear cocycle over the Razy-Veech renormalization $R$ defined by
$$
F_{R}: \Pi^{1}\times \mathbb{R}^{\mathcal{A}}\rightarrow \Pi^{1}\times \mathbb{R}^{\mathcal{A}},\,\,\,\,\, (\pi,\, v)\rightarrow (r_{\varepsilon}(\pi),\, \Theta_{\pi,\, \varepsilon}(v)),
$$
where $r_{\varepsilon}(\pi)$ is the combinatorial data of $R(f)$ and linear isomorphism $\Theta=\Theta_{\pi,\, \varepsilon}$ defined by
$$
\Theta_{\alpha,\, \beta}= \left\{\begin{array}{ll}
1,\,\,\,\,  \mbox{\rm if either} \,\,\,\,\, \alpha=\beta\,\,\,\, \mbox{\rm or}\,\,\, (\alpha,\, \beta)=(\alpha(1-\varepsilon),\, \alpha(\varepsilon)), \\
0,\,\,\,\,\, \mbox{\rm otherwise}.
\end{array}\right.
$$
In other words, \,
$\Theta_{\pi,\, \varepsilon}=\mathbb{I}+E_{\alpha(1-\varepsilon)\alpha(\varepsilon)},$
where $\mathbb{I}$ is the unit matrix and $E_{\alpha\beta}$ is the elementary matrix whose only nonzero coefficients is $1$ in position $(\alpha,\, \beta)$. Note that $F^{n}_{R}(\pi,\, v)=(R^{n}(\pi),\, \Theta^{n}_{\pi}(v))$ for all $n\geq 1$, where
$$
\Theta^{n}=\Theta^{n}_{\pi}=\Theta_{\pi^{n-1}}\cdot\cdot\cdot\Theta_{\pi^{\prime}}\Theta_{\pi},\,\,\,\, \mbox{\rm and}\,\,\,\, \pi'=r_{\varepsilon}(\pi).
$$

Note also that the matrixes $\Omega_{\pi}$ and $\Theta_{\pi}$ connected with the relation:
\begin{equation}\label{thetaomega}
\Theta_{\pi,\, \varepsilon}\Omega_{\pi}=\Omega_{\pi'}\Theta^{-1\star}_{\pi,\, \varepsilon},
\end{equation}
where $\Theta^{\star}$ denotes the adjoint operator of $\Theta$, that is, the operator whose matrix is transported of that of $\Theta$.

Let $g:[0, 1]\mapsto [0, 1]$ be an affine i.e.m. without connection. Then $g$ is uniquely determined by the triple $(\pi,\, \lambda,\, \omega^{0})$, where $\pi$ is the combinatorial data, $\lambda=(\lambda_{\alpha})_{\alpha\in \mathcal{A}}\in \mathbb{R}^{d}_{+}$ is the partition vector of the domain and $\omega^{0}=(\omega^{0}_{\alpha})_{\alpha\in \mathcal{A}}\in \mathbb{R}^{d}$ is such that
$$
g(x)=e^{\omega^{0}_{\alpha}}x+\delta_{\alpha},\,\,\,\,  \mbox{\rm for all}\,\,\,\, x\in I_{\alpha}.
$$

For each $n$ denote by $\omega^{n}=(\omega^{n}_{\alpha})_{\alpha}$ the vector such that $R^{n}(g)(x)=e^{\omega^{n}_{\alpha}}x+\delta^{n}_{\alpha}$, for all $x\in I^{(n)}_{\alpha}$. By Rauzy--Veech algorithm we know that
$$
\omega^{n+1}_{\alpha}=\omega^{n}_{\alpha},\,\,\, \mbox{\rm if}\,\,\, \alpha\neq\alpha^{n}(1-\varepsilon),
$$
$$
\omega^{n+1}_{\alpha^{n}(1-\varepsilon)}=\omega^{n}_{\alpha^{n}(\varepsilon)}+\omega^{n}_{\alpha^{n}(1-\varepsilon)},\,\,\, \mbox{\rm otherwise},
$$
where $\alpha^{n}(\varepsilon)$ and $\alpha^{n}(1-\varepsilon)$ are the winners and losers of $R^{n}(f)$, respectively. Therefore, $\Theta_{n}(\omega^{n})=\Theta_{\pi^{n},\, \varepsilon^{n}}(\omega^{n})=\omega^{n+1}$. Repeating the process inductively we have
$$
\Theta_{n}\Theta_{n-1}\cdot\cdot\cdot\Theta_{1}\Theta_{0}(\omega^{0})=\omega^{n}.
$$


It is known that the Rauzy-Veech cocycle is partially hyperbolic [1,3]. To prove Theorem \ref{main5}
we need to understand the hyperbolic properties of the Rauzy--Veech cocycle restricted to the
$k$--bounded combinatorics.

If $\pi$ has genus one then $dim Ker \Omega_{\pi}=d-2$ and $dim Im \Omega_{\pi}=2$. Define the two-dimensional cone \ $C^{s}_{\pi}:=\Omega_{\pi}\mathbb{R}^{\mathcal{A}}_{+}\subset Im \Omega_{\pi}$. It follows from (\ref{thetaomega}) that \ $\Theta^{-1}_{\pi, \varepsilon}C^{s}_{\pi'}\subset C^{s}_{\pi}$. For each $\pi\in \Pi^{1}$ define the convex cone
$$
T^{+}_{\pi}=\left\{(\tau_{\alpha})_{\alpha\in \mathcal{A}}: \sum\limits_{\pi_{0}\leq k}\tau_{\alpha}>0\,\, \mbox{\rm and}\,\, \sum\limits_{\pi_{1}\leq k}\tau_{\alpha}<0,\,\, \mbox{\rm for every}\,\, 1\leq k\leq d-1 \right\}.
$$

Define $C^{u}_{\pi}=-\Omega_{\pi}T^{+}_{\pi}\subset Im \Omega_{\pi}$. Applying $T^{+}_{\pi}$ in (\ref{thetaomega}) we obtain: $\Theta_{\pi,\, \varepsilon}(-\Omega_{\pi}T^{+}_{\pi})=-\Omega_{\pi'}(\Theta^{t}_{\pi,\, \varepsilon})^{-1}T^{+}_{\pi}\subset -\Omega_{\pi'}T^{+}_{\pi'}$. Consequently, we have $\Theta_{\pi,\, \varepsilon}C^{u}_{\pi}\subset C^{u}_{\pi'}$. It is clear that the cones $C^{s}_{\pi}$ and $C^{u}_{\pi}$ are invariant by the action of $\Theta^{-1}_{\pi,\, \varepsilon}$ and $\Theta_{\pi,\, \varepsilon}$, respectively.

The following result \cite{CS2014} shows that Rauzy--Veech cocycle is hyperbolic inside of $\mbox{\it Im}\, \Omega_{\pi}$.

\begin{prop}\label{unfhyper}(Uniform hyperbolicity)(\cite{CS2014}) Let $(\pi^{n},\, \varepsilon^{n})$ be a sequence of $k$-bounded combinatorics with $r_{\varepsilon^{n}}(\pi^{n})=\pi^{n+1}$. Then there exist $\mu=\mu(k)>1$ and $C_{1},\, C_{2}>0$ such that
\begin{itemize}
\item[$(a)$] For every $n$ and $v\in C^{u}_{\pi^{0}}$ we have $\|(\Theta_{\pi^{n},\, \varepsilon^{n}}\cdot\cdot\cdot\Theta_{\pi^{1},\, \varepsilon^{1}}\Theta_{\pi^{0},\, \varepsilon^{0}})(v)\|\geq C_{1}\mu^{n}\|v\|$.
\item[$(b)$] For every $n$ and $v\in C^{s}_{\pi^{n}}$ we have $\|(\Theta_{\pi^{n-1},\, \varepsilon^{n-1}}\cdot\cdot\cdot\Theta_{\pi^{1},\, \varepsilon^{1}}\Theta_{\pi^{0},\, \varepsilon^{0}})^{-1}(v)\|\geq C_{2}\mu^{n}\|v\|$.
\end{itemize}
\end{prop}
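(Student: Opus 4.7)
The plan would be as follows. Since $f$ has genus one, $\dim\mathrm{Im}\,\Omega_\pi=2$ and both cones $C^u_\pi$ and $C^s_\pi$ sit inside this two-dimensional invariant subspace. A short computation from the identity (\ref{thetaomega}) shows that, in a basis of $\mathrm{Im}\,\Omega_\pi$ compatible with the symplectic form induced by $\Omega_\pi$, the restriction $\Theta_{\pi,\varepsilon}|_{\mathrm{Im}\,\Omega_\pi}$ has $|\det|=1$. So the problem reduces to an area-preserving $2\times 2$ matrix product with invariant cones, and it will be enough to prove uniform projective contraction of one invariant cone: norm expansion on $C^u$ (for part (a)) and on $C^s$ under the inverse cocycle (for part (b)) will then follow automatically, because in two dimensions area preservation forces the singular values to be reciprocals, so projective contraction at rate $\rho$ forces the larger one to grow like $\rho^{-1/2}$.

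To establish uniform projective contraction I would combine the definition of $k$-bounded combinatorics with the Birkhoff–Hopf theorem. Since $\Theta_{\pi,\varepsilon}=\mathbb{I}+E_{\alpha(1-\varepsilon)\alpha(\varepsilon)}$, each renormalization step inserts exactly one new strictly positive off-diagonal entry, at position $(\alpha(1-\varepsilon),\alpha(\varepsilon))$. The chain condition in the definition of $k$-bounded combinatorics guarantees that for every ordered pair $(\beta,\gamma)\in\mathcal{A}^2$ there is a window of length $O(k)$ containing a sequence of consecutive Rauzy steps whose cumulative product acquires a strictly positive $(\gamma,\beta)$-entry, which one verifies by induction on the length of the chain. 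Concatenating such windows over the finite collection of pairs yields $N=N(k,d)$ such that, for every $n$, the block $B_n:=\Theta_{\pi^{n+N-1},\varepsilon^{n+N-1}}\cdots\Theta_{\pi^n,\varepsilon^n}$ is a strictly positive matrix, and in particular maps $C^u_{\pi^n}$ into the interior of $C^u_{\pi^{n+N}}$. The Birkhoff–Hopf theorem then gives a projective contraction factor bounded by some $\rho<1$, uniform in $n$ because the combinatorial data live in a finite set, and iterating these blocks produces the required geometric expansion rate for part (a). Part (b) follows from the identical chaining argument applied to the adjoint cocycle $\Theta^\star_{\pi,\varepsilon}=\mathbb{I}+E_{\alpha(\varepsilon)\alpha(1-\varepsilon)}$, which is again non-negative, combined with (\ref{thetaomega}) to transport the expansion to $\Theta^{-1}$ acting on $C^s_{\pi^n}$.

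The main obstacle is the positivity step. One has to verify that the somewhat opaque chain condition in the definition of $k$-bounded combinatorics really does propagate strict positivity into the cumulative product, and does so simultaneously for every ordered pair of letters within a window of uniformly bounded length. This amounts to careful bookkeeping of how the elementary contributions $E_{\alpha(1-\varepsilon^i)\alpha(\varepsilon^i)}$ combine under multiplication along a chain—essentially an induction showing that a product of the form $(\mathbb{I}+E_{b_1 a_1})\cdots(\mathbb{I}+E_{b_p a_p})$ has a strictly positive $(b_p,a_1)$-entry precisely when the compatibility relation $b_i=a_{i+1}$ encoded by $k$-boundedness holds. Once this positivity is in hand, the Birkhoff–Hopf contraction and the passage from projective contraction to norm expansion in the area-preserving two-dimensional setting are standard.
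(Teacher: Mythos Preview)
The paper does not supply its own proof of this proposition; it is quoted from \cite{CS2014} without argument. So there is nothing in the paper to compare your proposal against, and the only question is whether your outline stands on its own.

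Your strategy is the standard one and is sound: positivity of block products from the chain condition in $k$-bounded combinatorics, Birkhoff--Hopf projective contraction of the invariant cone, and then conversion of projective contraction into norm growth using that the restriction of the cocycle to the two-dimensional space $\mathrm{Im}\,\Omega_\pi$ is symplectic (hence area-preserving). The bookkeeping you describe for propagating a positive entry along a winner--loser chain is precisely how $k$-boundedness is exploited, and finiteness of the combinatorial data gives the uniform constants.

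One step does need an extra sentence. When you say that a strictly positive block $B_n$ ``in particular'' sends $C^u_{\pi^n}$ into the interior of $C^u_{\pi^{n+N}}$, this is not automatic: strict positivity contracts the positive orthant $\mathbb{R}^{\mathcal{A}}_+$, whereas $C^u_\pi=-\Omega_\pi T^+_\pi$ is a cone in $\mathrm{Im}\,\Omega_\pi$. What closes the gap is the elementary observation that $C^u_\pi\subset\mathrm{int}(\mathbb{R}^{\mathcal{A}}_+)$ for every irreducible $\pi$: one has $(-\Omega_\pi\tau)_\alpha=\sum_{\pi_0(\beta)<\pi_0(\alpha)}\tau_\beta-\sum_{\pi_1(\beta)<\pi_1(\alpha)}\tau_\beta$, and for $\tau\in T^+_\pi$ the first partial sum is nonnegative while the second is nonpositive, with both vanishing only if $\pi_0(\alpha)=\pi_1(\alpha)=1$, which irreducibility forbids. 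In genus one this inclusion is in fact an equality $C^u_\pi=\mathrm{Im}\,\Omega_\pi\cap\mathrm{int}(\mathbb{R}^{\mathcal{A}}_+)$, so strict positivity of $B_n$ together with invariance of $\mathrm{Im}\,\Omega$ gives $B_n(\overline{C^u_{\pi^n}}\setminus\{0\})\subset\mathrm{Im}\,\Omega_{\pi^{n+N}}\cap\mathrm{int}(\mathbb{R}^{\mathcal{A}}_+)=C^u_{\pi^{n+N}}$, which is already open in $\mathrm{Im}\,\Omega_{\pi^{n+N}}$ and hence has finite Hilbert diameter there. With this remark inserted, your argument for part~(a) is complete, and the treatment of part~(b) via $\Theta^\star$ and relation~(\ref{thetaomega}) then goes through in the same way.
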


\bigskip

Next we study the behaviour of Rauzy--Veech cocycle outside of $\mbox{\it Im}\, \Omega_{\pi}$. According to Proposition \ref{unfhyper}, we define the stable direction in the point $\{\pi^{j},\, \varepsilon^{j}\}$ as
$$
E^{s}_{j}:=E^{s}(\pi^{j})=\bigcap\limits_{n\geq0}\Theta^{-1}_{j}\cdot\cdot\cdot\Theta^{-1}_{j+n}(C^{s}_{\pi^{j+n+1}}).
$$
By definition, the subspaces $E^{s}_{j}$ are invariant by the Rauzy--Veech cocycle, that is, $\Theta_{j}(E^{s}_{j})=E^{s}_{j+1}$, for all $j\geq0$. Now we define the unstable direction. Let $u_{0}\in C^{s}_{\pi^{0}}$ be such that $\|u_{0}\|=1$. Then we define $E^{u}_{0}$ as the subspace spanned by $u_{0}$, that will be denoted by $\langle u_{0}\rangle$. For all $j>0$, we define $E^{u}_{j}:=\langle\frac{u_{j}}{\|u_{j}\|}\rangle$, where $u_{j}=\Theta_{j-1}(u_{j-1})$. The subspaces $E^{u}_{j}$ are forward invariant by the Rauzy--Veech cocycle.

Suppose that the combinatorics has period $p$, that is, there exists $p\in \mathbb{N}$ such that $\{\pi^{n},\, \varepsilon^{n}\}=\{\pi^{n+p},\, \varepsilon^{n+p}\}$ for all $n\in \mathbb{N}$.

\begin{lemm}\label{cdper}(\cite{CS2014}) Define $\Psi_{p}(k)=(\Theta_{0,\, p-1}-Id)^{-1}(k-\Theta_{0,\, p-1})$. Then
\begin{itemize}
\item[(1)] the subspace $E^{c}_{0,\, p-1}:=\{k+\Psi_{p}(k),\,\, k\in Ker(\Omega_{\pi^{0}})\}$ is the central direction of $\Theta_{0,\, p-1}$, that is, $\Theta_{0,\, p-1}v=v$ for every $v\in E^{c}_{0,\, p-1}$.
\item[(2)] the subspace $E^{c}_{0,\, p-1}$ is invariant by Rauzy--Veech cocycle, that is, $\Theta(E^{c}_{0,\, p-1})=E^{c}_{1,\, p-1}$.
\end{itemize}
\end{lemm}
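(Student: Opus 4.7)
The plan is to verify (1) by a direct algebraic identity after justifying the inversion, and to deduce (2) from a telescoping conjugacy.

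For (1), set $T := \Theta_{0,p-1}$ and read the definition of $\Psi_p$ as the linear map $\mathrm{Ker}\,\Omega_{\pi^0} \to \mathrm{Im}\,\Omega_{\pi^0}$ sending $k$ to the unique $w \in \mathrm{Im}\,\Omega_{\pi^0}$ with $(T - \mathrm{Id})w = k - Tk$. Two ingredients make this definition legitimate. First, Proposition \ref{unfhyper} applied along the period-$p$ orbit shows that $T$ preserves the two-dimensional subspace $\mathrm{Im}\,\Omega_{\pi^0}$ and that its restriction there is hyperbolic with both eigenvalues bounded away from $1$ in modulus, so $(T - \mathrm{Id})|_{\mathrm{Im}\,\Omega_{\pi^0}}$ is invertible. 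Second, combining the intertwining relation (\ref{thetaomega}) telescoped around the period $\pi^p = \pi^0$ with the genus-one identity $\dim \mathrm{Im}\,\Omega_\pi = 2$, one checks that the induced action of $T$ on the quotient $\mathbb{R}^{\mathcal{A}}/\mathrm{Im}\,\Omega_{\pi^0}$ is the identity, so that $k - Tk \in \mathrm{Im}\,\Omega_{\pi^0}$ for every $k \in \mathrm{Ker}\,\Omega_{\pi^0}$. With $\Psi_p$ well defined, the fixed-point identity is a one-line computation:
\begin{equation*}
T(k + \Psi_p(k)) - (k + \Psi_p(k)) = (Tk - k) + (T - \mathrm{Id})\Psi_p(k) = (Tk - k) + (k - Tk) = 0.
\end{equation*}
This proves $E^c_{0,p-1} \subseteq \mathrm{Fix}(T)$, and the reverse inclusion follows from a dimension count, since the two hyperbolic directions contribute to $\mathrm{Im}\,\Omega_{\pi^0}$ and therefore $\dim\mathrm{Fix}(T) = d - 2 = \dim\mathrm{Ker}\,\Omega_{\pi^0} = \dim E^c_{0,p-1}$.

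For (2), write $T_0 := \Theta_{0,p-1} = \Theta_{p-1}\cdots\Theta_1\Theta_0$ and let $T_1$ denote the analogous period-$p$ return operator based at $\pi^1$. Combinatorial periodicity gives $\Theta_p = \Theta_0$, whence the telescoping identity $T_1 \Theta_0 = \Theta_0 T_0$ is immediate. Consequently, for any $v \in E^c_{0,p-1}$ we have $T_1(\Theta_0 v) = \Theta_0 T_0 v = \Theta_0 v$, so $\Theta_0 v \in \mathrm{Fix}(T_1) = E^c_{1,p-1}$. Invertibility of $\Theta_0$ together with equality of the dimensions of the two central subspaces yields the claimed identity $\Theta_0(E^c_{0,p-1}) = E^c_{1,p-1}$.

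The main obstacle in carrying out the plan is the assertion that the quotient action of $T$ on $\mathbb{R}^{\mathcal{A}}/\mathrm{Im}\,\Omega_{\pi^0}$ is the identity; this is the structural fact that legitimizes the inversion in the definition of $\Psi_p$ and is where the genus-one hypothesis enters in an essential way. I would establish it by unfolding (\ref{thetaomega}) one step at a time along the periodic orbit, using $\Omega_{\pi^p} = \Omega_{\pi^0}$ to close up. Once this algebraic fact is in place, everything else reduces to the formal fixed-point computation above.
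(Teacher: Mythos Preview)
The paper does not supply its own proof of this lemma; it is quoted from \cite{CS2014} without argument. Your approach is the natural one and coincides with that reference: once $\Psi_p$ is well defined as a map $\mathrm{Ker}\,\Omega_{\pi^0}\to\mathrm{Im}\,\Omega_{\pi^0}$, part~(1) reduces to the one-line identity you display, and part~(2) follows from the periodicity conjugacy $T_1\Theta_0=\Theta_0 T_0$. You have correctly isolated the only substantive step, namely that $k-Tk\in\mathrm{Im}\,\Omega_{\pi^0}$ for every $k\in\mathrm{Ker}\,\Omega_{\pi^0}$ (equivalently, that $T$ acts trivially on $\mathbb{R}^{\mathcal A}/\mathrm{Im}\,\Omega_{\pi^0}$), together with the invertibility of $(T-\mathrm{Id})$ on $\mathrm{Im}\,\Omega_{\pi^0}$; the latter is immediate from Proposition~\ref{unfhyper}, and for the former you are right that the genus-one structure of $\mathrm{Ker}\,\Omega_\pi$ is what makes the telescoped relation~(\ref{thetaomega}) close up.
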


\bigskip

By Proposition \ref{unfhyper} we can choose $n_{0}>0$ and $\mu>>1$ such that
$$
\|\Theta_{n,\, n+n_{0}}(x)\|\geq \mu\|x\|,\,\, \forall x\in C_{\pi^{n}}^{u}\,\,\,\,\, \mbox{\rm and}\,\,\,\,\, \|\Theta_{n,\, n+n_{0}}(x)\|\geq \mu\|x\|,\,\, \forall x\in C_{\pi^{n}}^{s}.
$$

For $\epsilon>0$, define the cones $C^{n}_{\epsilon,\, u}$ and $C^{n}_{\epsilon,\, s}$, where $C^{n}_{\epsilon,\, u}$ is the set of vectors $x=x_{k}+x_{i}\in Ker \Omega_{\pi^{n}}\oplus Im \Omega_{\pi^{n}}$ such that
\begin{itemize}
\item[(a)] $\|x_{k}\|\leq \epsilon \|x_{i}\|$;
\item[(b)] $x_{i}=x^{s}_{i}+x^{u}_{i}$, where $x^{s}_{i}\in \Theta_{n+n_{0}-1,\, n}C^{s}_{\pi^{n+n_{0}}}\subset C^{s}_{\pi^{n}}$,\,\,  $x^{u}_{i}\in \Theta_{n-n_{0},\, n-1}C^{u}_{\pi^{n+n_{0}}}\subset C^{u}_{\pi^{n}}$ and $\|x^{s}_{i}\|\leq x^{u}_{i}$;
\end{itemize}
and analogously defined $C^{n}_{\epsilon,\, s}$ by replacing the last condition by $\|x^{u}_{i}\|\leq x^{u}_{s}$. Define also $C^{n}_{\epsilon}:= C^{n}_{\epsilon,\, s}\bigcup C^{n}_{\epsilon,\, u}$.

\begin{prop}\label{cdsub}(\cite{CS2014}) There exist $\epsilon_{0}=\epsilon_{0}(k)>0$ and $0<\gamma<1$ such that if $\epsilon<\epsilon_{0}$ then
$$
\Theta_{n,\, n+n_{0}-1}C^{n}_{\epsilon,\, u}\subset C^{n+n_{0}}_{\gamma\epsilon,\, u},\,\,\, \mbox{\rm and}\,\,\,  \Theta_{n-1,\, n-n_{0}}C^{n}_{\epsilon,\, s}\subset C^{n-n_{0}}_{\gamma\epsilon,\, u}.
$$
\end{prop}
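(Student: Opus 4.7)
The plan is to track a vector $x \in C^{n}_{\epsilon, u}$ through the $n_{0}$-step forward cocycle $\Theta := \Theta_{n, n+n_{0}-1}$, and to verify the defining conditions of $C^{n+n_{0}}_{\gamma\epsilon, u}$, with $\gamma < 1$ produced by the uniform expansion/contraction rates of Proposition \ref{unfhyper}. First I would decompose $x = x_{k} + x_{i}^{s} + x_{i}^{u}$ according to $\operatorname{Ker}\Omega_{\pi^{n}} \oplus \operatorname{Im}\Omega_{\pi^{n}}$ and the sub-cone splitting in (b). The identity \eqref{thetaomega} gives $\Theta(\operatorname{Im}\Omega_{\pi^{n}}) = \operatorname{Im}\Omega_{\pi^{n+n_{0}}}$, so $\Theta(x_{i}^{s}) + \Theta(x_{i}^{u})$ is automatically in the new image, whereas $\Theta(x_{k})$ splits as $z_{k} + z_{i}$ with $z_{k} \in \operatorname{Ker}\Omega_{\pi^{n+n_{0}}}$ and $z_{i} \in \operatorname{Im}\Omega_{\pi^{n+n_{0}}}$. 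The operator norm of $\Theta$ and of both projections are bounded uniformly by some $K = K(k)$, because $\Theta$ is a product of at most $n_{0}$ elementary matrices and the combinatorics is $k$-bounded. By Proposition \ref{unfhyper}, $x_{i}^{u} \in C^{u}_{\pi^{n}}$ gives $\|\Theta(x_{i}^{u})\| \geq \mu\|x_{i}^{u}\|$, and the dual statement on $C^{s}$ yields $\|\Theta(x_{i}^{s})\| \leq \mu^{-1}\|x_{i}^{s}\|$. Note that $\Theta(x_{i}^{u})$ lies tautologically in the new unstable sub-cone $\Theta_{n, n+n_{0}-1}C^{u}_{\pi^{n}}$, while $\Theta(x_{i}^{s}) \in C^{s}_{\pi^{n+n_{0}}}$ need not fall inside the strictly thinner new stable sub-cone $\Theta_{n+2n_{0}-1, n+n_{0}}C^{s}_{\pi^{n+2n_{0}}}$.

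Next I would re-decompose the full new image part $\Theta(x_{i}) + z_{i}$ along the pair of new sub-cones at time $n+n_{0}$, producing $\tilde{x}_{i}^{s} + \tilde{x}_{i}^{u}$ with $\tilde{x}_{i}^{u}$ in the new unstable sub-cone and $\tilde{x}_{i}^{s}$ in the new stable sub-cone. Under $k$-bounded combinatorics, Proposition \ref{unfhyper} and Lemma \ref{cdper} guarantee that these two sub-cones span the two-dimensional $\operatorname{Im}\Omega_{\pi^{n+n_{0}}}$ with an aperture bounded away from zero, so the decomposition is unique and each piece has norm controlled by a uniform constant $C = C(k)$ times the norm of the input. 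Combining this with the expansion/contraction estimates and with $\|x_{i}\| \leq 2\|x_{i}^{u}\|$ from the domination in (b), I would obtain $\|\tilde{x}_{i}^{s}\| \leq (C\mu^{-1} + 2CK\epsilon)\|x_{i}^{u}\|$ and $\|\tilde{x}_{i}^{u}\| \geq (\mu - C\mu^{-1} - 2CK\epsilon)\|x_{i}^{u}\|$. For $n_{0}$ chosen so large that $\mu$ dominates $C$ and $CK$, and for $\epsilon \leq \epsilon_{0}(k)$, the inequality $\|\tilde{x}_{i}^{s}\| \leq \|\tilde{x}_{i}^{u}\|$ (the new version of (b)) is immediate, and the analogous computation $\|z_{k}\|/\|\tilde{x}_{i}\| \leq 4K\epsilon/\mu$ yields the new version of (a) with any $\gamma \geq 4K/\mu$; choosing $\mu$ still larger makes $\gamma < 1$. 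The second inclusion $\Theta_{n-1, n-n_{0}}C^{n}_{\epsilon, s} \subset C^{n-n_{0}}_{\gamma\epsilon, s}$ is proved symmetrically by running the inverse cocycle and swapping the roles of the stable and unstable cones.

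The only genuinely subtle point I foresee is the uniform lower bound on the angle between the new stable sub-cone (a backward image from time $n+2n_{0}$) and the new unstable sub-cone (a forward image from time $n$), which is what makes the aperture constant $C(k)$ finite. A priori, iterating the cocycle could push both sub-cones arbitrarily close to the common central line predicted by Lemma \ref{cdper}, destroying transversality. The $k$-bounded combinatorics hypothesis, through the uniform hyperbolicity rates of Proposition \ref{unfhyper}, is precisely what rules this out; once this transversality is secured, the remainder is a routine bookkeeping of the exponential factors.
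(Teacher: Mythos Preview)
The paper does not actually prove this proposition: it is stated with the citation \cite{CS2014} and no argument is given. So there is no ``paper's own proof'' to compare against; the result is simply imported from Cunha--Smania.

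Your sketch follows the standard cone-field argument and is the right shape: track the $\operatorname{Ker}\Omega$/$\operatorname{Im}\Omega$ pieces separately, use \eqref{thetaomega} to see that $\operatorname{Im}\Omega$ is cocycle-invariant, apply the expansion/contraction rates from Proposition~\ref{unfhyper} to the $x_i^u$ and $x_i^s$ pieces, and then re-decompose at time $n+n_0$. The uniform bound $K=K(k)$ on $\|\Theta\|$ and on the projections is indeed available from $k$-boundedness, and the transversality of the new stable and unstable sub-cones inside the two-dimensional $\operatorname{Im}\Omega_{\pi^{n+n_0}}$ is exactly what makes the re-decomposition well-controlled.

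One small correction: in your closing paragraph you invoke Lemma~\ref{cdper} to describe a ``common central line'' the sub-cones might approach. That lemma concerns the central direction transverse to $\operatorname{Im}\Omega$ (built from $\operatorname{Ker}\Omega$), whereas the sub-cones you are worried about both live inside the two-dimensional $\operatorname{Im}\Omega$. The transversality you need is purely a consequence of Proposition~\ref{unfhyper}: the forward-iterated unstable cone and the backward-iterated stable cone are each strictly nested after $n_0$ steps, and in a two-dimensional space two strictly nested cones pointing in opposite senses are uniformly transverse. Lemma~\ref{cdper} plays no role here. Also note (as you implicitly did) that the second inclusion in the statement has a typo: the target cone should be $C^{n-n_0}_{\gamma\epsilon,\,s}$, not $C^{n-n_0}_{\gamma\epsilon,\,u}$.
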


\begin{prop}\label{eqcon}(\cite{CS2014}) Suppose that $p>n_{0}$. Then $\sup\limits_{0\neq k\in Ker \Omega_{\pi^{0}}}\frac{\|\Psi_{p}(k)\|}{\|k\|}\leq \frac{1}{\epsilon_{0}}$.
\end{prop}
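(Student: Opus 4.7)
The plan is to argue by contradiction: suppose there is a nonzero $k \in \ker \Omega_{\pi^{0}}$ with $\|\Psi_{p}(k)\| > \|k\|/\epsilon_{0}$, equivalently $\|k\| < \epsilon_{0}\|\Psi_{p}(k)\|$, and derive a contradiction from hyperbolicity of $\Theta_{0,p-1}$ on the cones from Proposition \ref{cdsub}. By Lemma \ref{cdper}, the vector $v := k + \Psi_{p}(k) \in E^{c}_{0,p-1}$ is fixed by $\Theta_{0,p-1}$, hence $\Theta_{0,mp-1}(v)=v$ for every $m\geq 1$ (reading $\Theta_{0,mp-1}$ as $\Theta^{m}$ of the period-$p$ block).

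First I would show that this fixed vector $v$ lies in one of the cones $C^{0}_{\epsilon_{0},u}$ or $C^{0}_{\epsilon_{0},s}$. Condition (a) in the cone definition is automatic: the kernel component of $v$ is $k$ and the image component is $\Psi_{p}(k)$, so the assumption gives $\|v_{k}\| = \|k\| < \epsilon_{0}\|\Psi_{p}(k)\| = \epsilon_{0}\|v_{i}\|$. For condition (b), I use that $\Theta_{0,p-1}$ restricted to the two-dimensional space $\operatorname{Im}\Omega_{\pi^{0}}$ is uniformly hyperbolic by Proposition \ref{unfhyper}, so it has an attracting unstable line $\mathcal{L}^{u}$ and a repelling stable line $\mathcal{L}^{s}$; these lines automatically sit inside the pulled-back cones $\Theta_{-n_{0},-1}C^{u}_{\pi^{-n_{0}}}$ and $\Theta_{n_{0}-1,0}C^{s}_{\pi^{n_{0}}}$ respectively, because those cones are nested in $C^{u}_{\pi^{0}}$, $C^{s}_{\pi^{0}}$ and contain the asymptotic dynamical directions. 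Decomposing $\Psi_{p}(k) = x^{s}+x^{u}$ along $\mathcal{L}^{s}\oplus \mathcal{L}^{u}$ and, without loss of generality, assuming $\|x^{s}\|\leq\|x^{u}\|$ (otherwise run the symmetric argument with $C^{0}_{\epsilon_{0},s}$ and $\Theta^{-1}$; if needed replace $k$ by $-k$ to adjust signs into the positive cones), we obtain $v \in C^{0}_{\epsilon_{0},u}$.

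Now I would iterate Proposition \ref{cdsub}: each $n_{0}$-block of $\Theta$ shrinks the cone aperture by a factor $\gamma<1$, so $\Theta_{0,N n_{0}-1}(v) \in C^{N n_{0}}_{\gamma^{N}\epsilon_{0},u}$ for every $N\geq 1$. To synchronize with the period $p>n_{0}$, take $N = pj$ so that $Nn_{0} = pj n_{0}$ is a multiple of $p$; by periodicity $C^{pj n_{0}}_{\cdot,u}=C^{0}_{\cdot,u}$ and $\Theta_{0, pjn_{0}-1}(v)=v$, hence $v \in C^{0}_{\gamma^{pj}\epsilon_{0},u}$ for all $j\geq 1$. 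Condition (a) of this cone forces $\|k\|=\|v_{k}\|\leq \gamma^{pj}\epsilon_{0}\|v_{i}\|$; letting $j\to\infty$ gives $k=0$, contradicting $k\neq 0$. Therefore $\|\Psi_{p}(k)\|/\|k\|\leq 1/\epsilon_{0}$ for every nonzero $k$, proving the claimed supremum bound.

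The main obstacle I anticipate is step (b) of the cone membership: verifying that the stable/unstable decomposition of $\Psi_{p}(k)\in \operatorname{Im}\Omega_{\pi^{0}}$ actually lands in the smaller pulled-back cones, not merely in $C^{s,u}_{\pi^{0}}$. The argument rests on the fact that in the periodic setting the invariant lines of $\Theta_{0,p-1}|_{\operatorname{Im}\Omega}$ lie in the intersection $\bigcap_{n\geq 0}\Theta_{-n,-1}C^{u}_{\pi^{-n}}$ (and analogously for the stable line), which uses both the strict inclusion provided by Proposition \ref{unfhyper} and the compatibility of cones under the cocycle. A minor secondary point is the sign convention when decomposing an arbitrary vector of $\operatorname{Im}\Omega_{\pi^{0}}$ along the two lines; this is handled by noting that both $\pm k$ yield the same ratio $\|\Psi_{p}(k)\|/\|k\|$, allowing us to place $x^{s}$ and $x^{u}$ in the prescribed (positive) cones.
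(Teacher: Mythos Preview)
The paper does not supply its own proof of this proposition; it simply quotes the statement from \cite{CS2014}. Your argument is precisely the cone--contraction proof given there: one assumes $\|k\|<\epsilon_{0}\|\Psi_{p}(k)\|$, observes that $v=k+\Psi_{p}(k)$ is a fixed vector of the period matrix lying in $C^{0}_{\epsilon_{0}}$, and then iterates Proposition~\ref{cdsub} along multiples of the period to force the kernel component to zero. So your proposal matches the intended proof.

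One remark on the point you yourself flag as the obstacle. Condition~(b) asks that the image component $\Psi_{p}(k)$ split as $x^{s}+x^{u}$ with $x^{s}$ and $x^{u}$ in the \emph{two--dimensional} pulled--back cones $\Theta_{n_{0}-1,0}C^{s}_{\pi^{n_{0}}}$ and $\Theta_{-n_{0},-1}C^{u}_{\pi^{-n_{0}}}$ (not merely on the one--dimensional invariant lines). In \cite{CS2014} this is handled by noting that, since $p>n_{0}$, these two planar cones have disjoint interiors and together with their negatives cover $\operatorname{Im}\Omega_{\pi^{0}}$; hence every vector of $\operatorname{Im}\Omega_{\pi^{0}}$ admits such a decomposition, and after possibly replacing $v$ by $-v$ one lands in $C^{0}_{\epsilon_{0},u}\cup C^{0}_{\epsilon_{0},s}$. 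Your appeal to the hyperbolic invariant lines is a slightly sharper (and correct) version of the same observation, since those lines lie in the intersection of all the iterated cones; just be aware that the sign adjustment cannot in general be done coordinate--wise, but the symmetric case ($C^{0}_{\epsilon_{0},s}$ with $\Theta^{-1}$) covers the remaining possibility exactly as you indicate.
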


Next we consider arbitrary $k$-bounded combinatorics. Let $f\in \mathbb{B}^{KO}$ and $\gamma(f)=\{\pi^{i},\, \varepsilon^{i}\}_{i\in \mathbb{N}}$ be its combinatorics. For each $n\in \mathbb{N}$ we define the new periodic combinatorics, that will be denoted by $\gamma_{n}(f)=\{\widetilde{\pi}^{i},\, \widetilde{\varepsilon}^{i}\}_{i\in \mathbb{N}}$:
\begin{itemize}
\item[(a)] For $i\leq n$ define $(\widetilde{\pi}^{i},\, \widetilde{\varepsilon}^{i})=(\pi^{i},\, \varepsilon^{i})$, and denote $\widetilde{\gamma}_{n}=\{\widetilde{\pi}^{i},\, \widetilde{\varepsilon}^{i}\}_{i=0}^{n}$;
\item[(b)] Let $\widetilde{\gamma}_{n,\, p_{n}}=\{(\widetilde{\pi}^{n},\, \widetilde{\varepsilon}^{n}),...,(\widetilde{\pi}^{p_{n}},\, \widetilde{\varepsilon}^{p_{n}})\}$ be an admissible sequence of combinatorics, i.e., $r_{\widetilde{\varepsilon}^{i}}(\widetilde{\pi}^{i})=\widetilde{\pi}^{i+1}$ for all $n\leq i <p_{n}$ with $(\widetilde{\pi}^{p_{n}},\, \widetilde{\varepsilon}^{p_{n}})=(\pi^{0},\, \varepsilon^{0})$. It is possible to get this sequence by \cite{Viana}.
\end{itemize}

Then define $\gamma_{n}(f)=(\widetilde{\gamma}_{n}\ast\widetilde{\gamma}_{p_{n}})\ast(\widetilde{\gamma}_{n}\ast\widetilde{\gamma}_{p_{n}})...$. Note that the combinatorics $\gamma_{n}(f)$ is periodic of period $p_{n}$ and that $\gamma_{n}(f)\rightarrow\gamma(f)$ when $n\rightarrow\infty$. The Rauzy--Veech cocycle associated to $\gamma_{n}(f)$ will be denoted by $\widetilde{\Theta}$. By Lemma \ref{cdper} we have that for all $s\geq 0$ the subspace $E^{c}_{s,\, p_{n}}$ is the graph of $\Psi_{s,\, p_{n}}$ and $\widetilde{\Theta}(E^{c}_{s,\, p_{n}})E^{c}_{s+1,\, p_{n}}$. By Proposition \ref{eqcon} the sequence $\{\Psi_{0,\, p_{n}}\}_{n\in \mathbb{N}}$ is equicontinuous and uniformly bounded, so it admits a subsequence $\{\Psi_{0,\, p_{n}}\}_{n\in \mathbb{N}_{0}}$ that uniformly converges. The same holds for $\{\Psi_{1,\, p_{n}}\}_{n\in \mathbb{N}_{0}}$, that is, we can find an infinite subset $\mathbb{N}_{1}\subset \mathbb{N}_{0}$ such that $\{\Psi_{1,\, p_{n}}\}_{n\in \mathbb{N}_{1}}$ is uniformly converge. Proceeding analogously for each $j\in \mathbb{N}$ we can find an infinite subset $\mathbb{N}_{j}\subset \mathbb{N}$ such that $\mathbb{N}_{0}\supset\mathbb{N}_{1}\supset\cdot\cdot\cdot\supset\mathbb{N}_{j}\supset\cdot\cdot\cdot$ and $\{\Psi_{j,\, p_{n}}\}_{n\in \mathbb{N}_{j}}$ uniformly converge. Now we define infinite set $\widetilde{\mathbb{N}}\subset\mathbb{N}$ taking our $j$-th element as $j$-th element of $\mathbb{N}_{j}$. Define the subspace $E^{c}_{j,\, \infty}$ as the graph of $\Psi_{j,\, \infty}=\lim\limits_{\mathbb{N}\ni n\rightarrow\infty}\Psi_{j,\, p_{n}}$.

The next proposition shows that the subspace $E^{c}_{j,\, \infty}$ are invariant by the Rauzy--Veech cocycle and the cocycle is quasi-isomerty in this central direction.

\begin{prop}\label{quasi}(\cite{CS2014})
\begin{itemize}
\item[(1)] For all $j\geq 0$ we have $\Theta_{j}(E^{c}_{j,\, \infty})=E^{c}_{j+1,\, \infty}$;

\item[(2)] For all vectors $v\in E^{c}_{0,\, \infty}$ and for all $n\geq 0$, there is  $C>1$ such that
$$
C^{-1}\|v\|\leq \|\Theta_{0,\, n}v\|\leq C\|v\|,
$$
where $\Theta_{0,\, n}=\Theta_{n}\Theta_{n-1}\cdot\cdot\cdot\Theta_{0}$.
\end{itemize}
\end{prop}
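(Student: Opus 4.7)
The plan is to establish part (1) by passing to the limit through the diagonal periodic approximations, and part (2) by reducing the quasi-isometry on $E^c_{0,\infty}$ to a uniform quasi-isometry of the cocycle induced on $\mathrm{Ker}\,\Omega_{\pi^0}$.

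For part (1), fix $j\geq 0$ and choose $n\in\widetilde{\mathbb{N}}$ with $n>j+1$. By construction of $\gamma_n(f)$, the approximating combinatorics $\widetilde\gamma$ agrees with $\gamma(f)$ through index $n$, hence $\widetilde\Theta_j=\Theta_j$. Lemma \ref{cdper}(2) then yields $\Theta_j(E^c_{j,p_n})=E^c_{j+1,p_n}$. Since $\Psi_{j,p_n}\to\Psi_{j,\infty}$ and $\Psi_{j+1,p_n}\to\Psi_{j+1,\infty}$ uniformly on bounded subsets of the respective kernels along $\widetilde{\mathbb{N}}$, the graphs $E^c_{j,p_n}$ and $E^c_{j+1,p_n}$ converge to $E^c_{j,\infty}$ and $E^c_{j+1,\infty}$. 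The linear map $\Theta_j$ is continuous, so the equality passes to the limit and gives $\Theta_j(E^c_{j,\infty})=E^c_{j+1,\infty}$.

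For part (2), fix $v=k+\Psi_{0,\infty}(k)\in E^c_{0,\infty}$ and $n\geq 0$; for $m\in\widetilde{\mathbb{N}}$ with $m>n$ approximate $v$ by $v_m=k+\Psi_{0,p_m}(k)\in E^c_{0,p_m}$, which is fixed by the periodic cocycle $\widetilde\Theta_{0,p_m-1}$ by Lemma \ref{cdper}(1). The combinatorics agree through step $n$, so $\Theta_{0,n}v_m=\widetilde\Theta_{0,n}v_m\in E^c_{n+1,p_m}$. Antisymmetry of $\Omega_\pi$ gives the orthogonal decomposition $\mathbb{R}^{\mathcal{A}}=\mathrm{Ker}\,\Omega_\pi\oplus\mathrm{Im}\,\Omega_\pi$; writing any $w\in E^c_{j,p_m}$ as $w=\kappa+\Psi_{j,p_m}(\kappa)$ with $\kappa=P^{\mathrm{Ker}}(w)$, Proposition \ref{eqcon} implies
\begin{equation*}
\|\kappa\|\leq\|w\|\leq\sqrt{1+\epsilon_0^{-2}}\,\|\kappa\|
\end{equation*}
uniformly in $j$ and $m$. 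Since relation (\ref{thetaomega}) implies $\Theta$ preserves $\mathrm{Im}\,\Omega$, projecting $\Theta_{0,n}v_m$ to $\mathrm{Ker}\,\Omega_{\pi^{n+1}}$ produces $k^{(n)}:=\bar\Theta_{0,n}(k)$, the induced image of $k$ on the kernel, which is independent of $m$. Letting $m\to\infty$ along $\widetilde{\mathbb{N}}$, continuity gives $\Theta_{0,n}v_m\to\Theta_{0,n}v=k^{(n)}+\Psi_{n+1,\infty}(k^{(n)})$, and the bounds above persist, so part (2) reduces to the uniform equivalence $\|\bar\Theta_{0,n}(k)\|\asymp\|k\|$ with constants independent of $n$.

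The main obstacle is precisely this uniform quasi-isometry of $\bar\Theta_{0,n}$ on $\mathrm{Ker}\,\Omega_{\pi^0}$. For each $m>n$ in $\widetilde{\mathbb{N}}$ the identity $\widetilde\Theta_{0,p_m-1}v_m=v_m$ yields $\bar{\widetilde\Theta}_{n+1,p_m-1}\circ\bar\Theta_{0,n}=I$, so $\bar\Theta_{0,n}$ is invertible for each such $m$; the difficulty is bounding its norm and that of its inverse independently of $n$ and $m$. I would achieve this by combining the hyperbolic cone control of Proposition \ref{cdsub}, which confines the stable and unstable components to invariant cones transverse to $\mathrm{Ker}$, with the uniform bound $\|\Psi_{j,p_m}\|\leq\epsilon_0^{-1}$ of Proposition \ref{eqcon}, which keeps $E^c_{j,p_m}$ within an $\epsilon_0$-cone of $\mathrm{Ker}$; an excessive growth of $\|\bar\Theta_{0,n}(k)\|$ would force $\Theta_{0,n}v_m$ either to leave this cone or to acquire a stable/unstable component that cannot be undone by $\widetilde\Theta_{n+1,p_m-1}$ compatibly with the periodic return $\widetilde\Theta_{0,p_m-1}v_m=v_m$ under $k$-bounded combinatorics. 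Making this quantitative and then passing $m\to\infty$ along $\widetilde{\mathbb{N}}$ produces the universal constant $C$ asserted in part (2).
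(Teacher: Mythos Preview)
The paper does not prove this proposition at all: it is quoted verbatim from \cite{CS2014} and no argument is supplied here. So there is no in-paper proof to compare against, only the question of whether your proposal stands on its own.

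Your argument for part~(1) is fine: the periodic approximations satisfy $\widetilde\Theta_j=\Theta_j$ for $j<n$, Lemma~\ref{cdper}(2) gives invariance of the periodic central graphs, and passing to the limit along $\widetilde{\mathbb N}$ is legitimate because $\Theta_j$ is a fixed linear map and the graphs converge.

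Part~(2), however, is not a proof as written. You correctly reduce the statement to the uniform equivalence $\|\bar\Theta_{0,n}(k)\|\asymp\|k\|$ of the induced cocycle on $\mathrm{Ker}\,\Omega_{\pi^0}$, and you correctly note that this is the crux. But then you write ``I would achieve this by\ldots'' and ``Making this quantitative\ldots'', which is a sketch, not an argument. More seriously, the mechanism you propose for the contradiction does not work as stated. You say that excessive growth of $\|\bar\Theta_{0,n}(k)\|$ would force $\Theta_{0,n}v_m$ to leave the $\epsilon_0$-cone around $\mathrm{Ker}$; but Lemma~\ref{cdper}(2) guarantees $\Theta_{0,n}v_m\in E^c_{n+1,p_m}$, which \emph{is} the graph of $\Psi_{n+1,p_m}$ and hence lies in that cone automatically, regardless of how large $\|k^{(n)}\|$ becomes --- the kernel and image parts simply scale together. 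Likewise, the ``stable/unstable component that cannot be undone'' is not pinned down: the image part of $\Theta_{0,n}v_m$ is exactly $\Psi_{n+1,p_m}(k^{(n)})$, and the periodic return $\widetilde\Theta_{n+1,p_m-1}$ certainly \emph{does} undo it, since $\widetilde\Theta_{0,p_m-1}v_m=v_m$. Nothing in Propositions~\ref{cdsub} or~\ref{eqcon} by themselves prevents $\|k^{(n)}\|$ from growing with $n$ while the return map from step $n{+}1$ to $p_m{-}1$ contracts it back; you would need an additional argument bounding the induced kernel cocycle uniformly in $n$, and that is precisely what is missing. If you want a self-contained proof, you should consult the original argument in \cite{CS2014}, which exploits more of the genus-one structure than the cone estimates recorded here.
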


\section{Proof of Main Results}

Let $f:[0, 1)\mapsto[0, 1)$ be a g.i.e.m. For simplicity we write $R^{n}(f)(x)=f_{n}(x)=f^{q^{\alpha}_{n}}(x)$ for $x\in I^{n}_{\alpha}$. Define $L^{n}=(L^{n}_{\alpha})_{\alpha\in \mathcal{A}}$ by
$$
L^{n}_{\alpha}=\frac{1}{|I^{n}_{\alpha}|}\int\limits_{I^{n}_{\alpha}}\ln Df_{n}(s)ds=\frac{1}{|I^{n}_{\alpha}|}\int\limits_{I^{n}_{\alpha}}\ln Df^{q^{\alpha}_{n}}(s)ds.
$$

Note that if $f$ is an affine i.e.m. then $L^{n}_{\alpha}=\omega^{n}_{\alpha}$ for all $\alpha\in \mathcal{A}$. The following proposition gives a relationship between $L^{n}$ and $L^{n+1}$, more precisely we prove that $L^{n}$ is an asymptotic pseudo-orbit for the Kontsevich-Zorich cocycle.

\begin{prop}\label{pseudo-orbit} Let $f\in \mathcal{B}^{KO}_{\star}$. Then
$$
L^{n+1}=\Theta_{n}L^{n}+\overrightarrow{\epsilon}_{n},
$$
where ${\|{\overrightarrow{\epsilon}_{n}}\|}=O(\delta_{n})$ with $\{\delta_{n}\}\in l_2$.
\end{prop}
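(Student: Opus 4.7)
The plan is to compute $L^{n+1}_{\alpha}$ case-by-case according to whether $\alpha$ is preserved or is the loser of the $n$-th renormalization, and identify the leading term with $(\Theta_{n}L^{n})_{\alpha}$. Let $\varepsilon = \varepsilon^{n}$, $\alpha_{w} = \alpha^{n}(\varepsilon)$ and $\alpha_{\ell} = \alpha^{n}(1-\varepsilon)$. The entire argument rests on the following consequence of Theorem \ref{main1}(2): for every $\beta \in \mathcal{A}$ the oscillation of $\ln Df_{n}$ over $I^{n}_{\beta}$ is $O(\delta_{n})$. Indeed $\mathrm{osc}_{I^n_\beta}(\ln Df_n) \leq \int_{I^n_\beta} |f_n''|/f_n'\, ds$, and passing to zoom coordinates one has $D^{2}Z_{I^{n}_{\beta}}(f_{n}) = (|I^{n}_{\beta}|^{2}/|f_{n}(I^{n}_{\beta})|)\,D^{2}f_{n}\circ\tau$, so the $k$-bounded combinatorics (keeping $|I^{n}_{\beta}|/|f_{n}(I^{n}_{\beta})|$ bounded) together with the uniform lower bound on $f_{n}'$ (from $DZ_{I^{n}_{\beta}}(f_{n}) \to 1$) reduce the claim to $\|Z_{I^{n}_{\beta}}(D^{2}R^{n}f)\|_{L_1} = O(\delta_{n})$.

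For the preserved indices $\alpha \neq \alpha_{\ell}$ one has $f_{n+1} = f_{n}$ on $I^{n+1}_{\alpha} \subset I^{n}_{\alpha}$ and $(\Theta_{n}L^{n})_{\alpha} = L^{n}_{\alpha}$. Hence $L^{n+1}_{\alpha}$ and $L^{n}_{\alpha}$ are averages of the same function $\ln Df_{n}$ on nested intervals, and their difference is bounded by $\mathrm{osc}_{I^n_\alpha}(\ln Df_n) = O(\delta_n)$.

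For the loser $\alpha = \alpha_{\ell}$ one has $f_{n+1} = f_{n}\circ f_{n}$ on $I^{n+1}_{\alpha_\ell}$, so the chain rule gives
$$L^{n+1}_{\alpha_{\ell}} \;=\; \mathrm{avg}_{I^{n+1}_{\alpha_{\ell}}}(\ln Df_{n}) \;+\; \mathrm{avg}_{I^{n+1}_{\alpha_{\ell}}}(\ln Df_{n}\circ f_{n}).$$
A short case analysis on the type shows that in type $0$ one has $I^{n+1}_{\alpha_{\ell}} = I^{n}_{\alpha_\ell}$ and $f_{n}(I^{n+1}_{\alpha_{\ell}}) \subset I^{n}_{\alpha_{w}}$, while in type $1$ the roles swap: $I^{n+1}_{\alpha_{\ell}} \subset I^{n}_{\alpha_{w}}$ and $f_n(I^{n+1}_{\alpha_\ell}) = I^n_{\alpha_\ell}$. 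In either case the first summand is an average of $\ln Df_n$ over a subinterval of some $I^n_\beta$ with $\beta \in \{\alpha_\ell, \alpha_w\}$, and so equals $L^n_\beta + O(\delta_n)$ by the same oscillation estimate; for the second summand the substitution $u = f_n(s)$ yields a weighted average of $\ln Df_n$ over $f_n(I^{n+1}_{\alpha_\ell})$ with weight $1/Df_n(f_n^{-1}(u))$, and since $Df_n$ is uniformly bounded above and below this weighted average lies within $O(\delta_n)$ of the corresponding unweighted $L^n_\gamma$ for the complementary symbol $\gamma$. Adding the two contributions recovers $L^n_{\alpha_\ell} + L^n_{\alpha_w} = (\Theta_n L^n)_{\alpha_\ell}$ up to $O(\delta_n)$.

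Combining both cases yields $L^{n+1} = \Theta_{n}L^{n} + \overrightarrow{\epsilon}_{n}$ with $\|\overrightarrow{\epsilon}_{n}\| = O(\delta_n)$, and $\{\delta_{n}\} \in l_{2}$ because Theorem \ref{main1} provides $\delta_{n} = \mathcal{O}(\lambda^{\sqrt n} + \eta_{n})$ with $\{\eta_{n}\} \in l_{2}$ and $\sum_{n}\lambda^{2\sqrt n} < \infty$. The main obstacle will be the loser case: one must correctly match which of the two symbols $\alpha_\ell, \alpha_w$ governs each factor (the assignment depending on the type $\varepsilon^n$), and the weighted-versus-unweighted average comparison requires the $k$-bounded combinatorics to uniformly control the zoom-factor ratios, without which the $O(\delta_n)$ bound would fail to be summable.
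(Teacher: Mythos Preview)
Your proof is correct and follows essentially the same route as the paper: a case analysis on whether $\alpha$ is neither winner nor loser, the winner, or the loser, with the key estimate in each case being the $O(\delta_n)$ oscillation of $\ln Df_n$ on $I^n_\beta$. The only cosmetic difference is that the paper invokes the mean value theorem to write each $L^n_\alpha$ as a point value $\ln Df^{q^n_\alpha}(x^n_\alpha)$ and then compares point values, whereas you compare averages directly via the oscillation bound; these are equivalent formulations. Your explicit derivation of the oscillation bound from Theorem~\ref{main1}(2) is a detail the paper leaves implicit, and your weighted-average substitution for the second summand in the loser case is unnecessary (the range of $\ln Df_n\circ f_n$ already lies in an interval of length $O(\delta_n)$, so any average suffices), but neither affects correctness.
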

\begin{proof} Denote by $x^{n}_{\alpha}\in I^{n}_{\alpha}$ the point such that $L^{n}_{\alpha}=\ln Df^{q^{\alpha}_{n}}(x^{n}_{\alpha})$, for all $n\geq 0$.
\begin{itemize}
\item[(*)] If \, $\alpha\neq \alpha^{n}(\varepsilon), \alpha^{n}(1-\varepsilon)$ then clearly $L^{n+1}_{\alpha}=L^{n}_{\alpha}$.

\item[(**)] If \, $\alpha= \alpha^{n}(\varepsilon)$ then $q^{n}_{\alpha^{n}(\varepsilon)}=q^{n+1}_{\alpha^{n}(\varepsilon)}$ and therefore
$$
L^{n+1}_{\alpha}=\ln Df^{q^{n}_{\alpha}}(x^{n+1}_{\alpha})=L^{n}_{\alpha}+\ln Df^{q^{n}_{\alpha}}(x^{n+1}_{\alpha})-\ln Df^{q^{n}_{\alpha}}(x^{n}_{\alpha})=L^{n}_{\alpha}+O(\delta_{n}).
$$
\item[(***)] If \, $\alpha=\alpha^{n}(1-\varepsilon)$ then $q^{n}_{\alpha^{n+1}(1-\varepsilon)}=q^{n}_{\alpha^{n}(1-\varepsilon)}+q^{n}_{\alpha^{n}(\varepsilon)}$. Note also that $f^{q^{n}_{\alpha(1-\varepsilon)}}\in I^{n}_{\alpha^{n}(\varepsilon)}$. Therefore
\end{itemize}
$$
L^{n+1}_{\alpha^{n}(1-\varepsilon)}=\ln Df^{q^{n+1}_{\alpha^{n+1}(1-\varepsilon)}}(x^{n+1}_{\alpha^{n}(1-\varepsilon)})=\ln Df^{q^{n}_{\alpha^{n}(\varepsilon)}}(f^{q^{n}_{\alpha^{n}(1-\varepsilon)}}(x^{n+1}_{\alpha^{n}(1-\varepsilon)}))+
$$
$$
+\ln Df^{q^{n}_{\alpha^{n}(1-\varepsilon)}}(x^{n+1}_{\alpha^{n}(1-\varepsilon)})=
\ln Df^{q^{n}_{\alpha^{n}(\varepsilon)}}(f^{q^{n}_{\alpha^{n}(1-\varepsilon)}}(x^{n+1}_{\alpha^{n}(1-\varepsilon)}))-\ln Df^{q^{n}_{\alpha^{n}(\varepsilon)}}(x^{n}_{\alpha^{n}(\varepsilon)})+
$$
$$
+L^{n}_{\alpha^{n}(\varepsilon)}+\ln Df^{q^{n}_{\alpha^{n}(1-\varepsilon)}}(x^{n+1}_{\alpha^{n}(1-\varepsilon)})- \ln Df^{q^{n}_{\alpha^{n}(1-\varepsilon)}}(x^{n}_{\alpha^{n}(1-\varepsilon)})+L^{n}_{\alpha^{n}(1-\varepsilon)}=
$$
$$
=
L^{n}_{\alpha^{n}(\varepsilon)}+L^{n}_{\alpha^{n}(1-\varepsilon)}+O(\delta_{n}).
$$
This completes the proof.
\end{proof}

Now we decompose the vector $L_{n}=(L_{n})_{\alpha\in \mathcal{A}}$ as
$$
L_{n}=L^{s}_{n}+L^{c}_{n}+L^{u}_{n}\in E^{s}_{n}\oplus E^{c}_{n, \infty}\oplus E^{u}_{n}.
$$

\begin{lemm}\label{lstab}
The sequence $\{L^{s}_{n}\}_{n\in N}$ satisfies $\|L^{s}_{n}\|=O(\delta_{n})$ with $\{\delta_{n}\}\in l_2$.
\end{lemm}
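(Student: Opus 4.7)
The plan is to combine the pseudo-orbit relation from Proposition \ref{pseudo-orbit} with the contraction of the Rauzy--Veech cocycle on the stable direction, and then cash this in using Lemma \ref{extra}.

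First, I will project the identity $L^{n+1}=\Theta_{n}L^{n}+\overrightarrow{\epsilon}_{n}$ onto the stable subspace $E^{s}_{n+1}$ along the splitting $E^{s}_{n+1}\oplus E^{c}_{n+1,\infty}\oplus E^{u}_{n+1}$. Since $\Theta_{n}$ sends $E^{s}_{n}$ into $E^{s}_{n+1}$ (forward invariance of the stable bundle), this projection yields
\[
L^{s}_{n+1}=\Theta_{n}L^{s}_{n}+\overrightarrow{\epsilon}^{\,s}_{n},
\]
where $\overrightarrow{\epsilon}^{\,s}_{n}$ is the stable component of $\overrightarrow{\epsilon}_{n}$. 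Because the combinatorics is $k$-bounded, the splitting is uniformly transverse, so the projections onto $E^{s}_{n+1}$ are uniformly bounded and $\|\overrightarrow{\epsilon}^{\,s}_{n}\|\leq K\|\overrightarrow{\epsilon}_{n}\|=O(\delta_{n})$ with $\{\delta_{n}\}\in l_{2}$.

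Next, I will iterate this relation forward starting from $n=0$ to obtain the Duhamel-type formula
\[
L^{s}_{n}=(\Theta_{n-1}\cdots\Theta_{0})L^{s}_{0}+\sum_{k=0}^{n-1}(\Theta_{n-1}\cdots\Theta_{k+1})\overrightarrow{\epsilon}^{\,s}_{k}.
\]
Proposition \ref{unfhyper}(b) is equivalent to uniform contraction on $E^{s}$ in forward time: there exist $C>0$ and $\mu>1$ such that $\|(\Theta_{n-1}\cdots\Theta_{k})v\|\leq C\mu^{-(n-k)}\|v\|$ for every $v\in E^{s}_{k}$. Applying this with $\lambda:=\mu^{-1}\in(0,1)$ gives
\[
\|L^{s}_{n}\|\leq C\lambda^{n}\|L^{s}_{0}\|+C'\sum_{k=0}^{n-1}\lambda^{\,n-k-1}\|\overrightarrow{\epsilon}_{k}\|.
\]

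Finally, setting $r_{k}:=\|\overrightarrow{\epsilon}_{k-1}\|$, the summation becomes $\sum_{j=0}^{n-1}\lambda^{j}r_{n-j}$, which is exactly the sequence $z_{n}$ in Lemma \ref{extra}. Since $\{r_{k}\}\in l_{2}$ by Proposition \ref{pseudo-orbit}, the lemma gives $\{z_{n}\}\in l_{2}$; the geometric term $C\lambda^{n}\|L^{s}_{0}\|$ is trivially in $l_{2}$. Therefore $\{\|L^{s}_{n}\|\}\in l_{2}$, which is what the lemma asserts. The only delicate point is the uniform boundedness of the projection onto $E^{s}_{n+1}$ and the uniform forward-contraction constant on $E^{s}$; both follow from the $k$-bounded combinatorics hypothesis together with the cone estimates in Propositions \ref{unfhyper} and \ref{cdsub}, so no new estimate is needed.
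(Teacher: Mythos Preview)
Your argument is correct and follows essentially the same route as the paper: project the pseudo-orbit relation onto the stable direction, use the uniform forward contraction of the cocycle there, iterate, and invoke Lemma \ref{extra} on the resulting convolution-type sum. The only cosmetic difference is that the paper passes to an adapted norm to obtain a one-step contraction $\|\Theta_n v\|\le \widetilde{\mu}^{-1}\|v\|$ before iterating, whereas you apply the multi-step estimate from Proposition \ref{unfhyper} directly and absorb the uniform constant into $C$; both lead to the same bound and the same appeal to the $z_n$ case of Lemma \ref{extra}.
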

\begin{proof} By Proposition \ref{unfhyper} we have for all $j,\, n\geq 0$ and for all $v\in E^{s}_{j}$ that
$$
\|\Theta_{j+n-1}\Theta_{j+n-2}\cdot\cdot\cdot\Theta_{j}v\|\leq \frac{1}{C_{2}\cdot \mu^{n}}\|v\|.
$$
Replacing this norm by the \textit{adapted norm}, see [13, Proposition 4.2], that we still denote it by $\|\cdot\|$ for simplicity, we can find $\mu>\widetilde{\mu}>1$ such that for all $n\geq 0$ and for all $v\in E^{s}_{j}$ we have $\|\Theta_{n}v\|\leq \widetilde{\mu}^{-1}\|v\|$. By Proposition \ref{pseudo-orbit} we have
$$
\|L^{s}_{n}\|\leq \frac{1}{\widetilde{\mu}}\|L^{s}_{n-1}\|+C\cdot \delta_{n}.
$$
Applying this estimate $n$ times, we obtain
$$
\|L^{s}_{n}\|\leq \frac{1}{\widetilde{\mu}^{n}}\|L^{s}_{0}\|+C\cdot \sum\limits_{i=0}^{n-1}\frac{1}{\widetilde{\mu}^{i}}\delta_{n-i-1}.
$$
Since $\widetilde{\mu}>1$, by the Lemma \ref{extra} we get the claim.
\end{proof}

\begin{lemm}\label{lunstab}
The sequence $\{L^{u}_{n}\}_{n\in N}$ satisfies $\|L^{u}_{n}\|=O(\delta_{n})$ with $\{\delta_{n}\}\in l_2$.
\end{lemm}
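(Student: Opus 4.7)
The plan is to mirror the proof of Lemma \ref{lstab}, but now running the cocycle \emph{backward} along the unstable direction, since on $E^u_n$ the cocycle $\Theta_n$ expands rather than contracts.

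First, I would project the pseudo-orbit identity of Proposition \ref{pseudo-orbit} onto the unstable factor. Because the splitting $\mathbb{R}^{\mathcal{A}}=E^s_n\oplus E^c_{n,\infty}\oplus E^u_n$ is $\Theta_n$-invariant and, under the $k$-bounded combinatorics assumption, the three subspaces stay uniformly transverse (by Propositions \ref{unfhyper} and \ref{quasi}), projecting yields
\[
L^u_{n+1}=\Theta_n L^u_n+\epsilon^u_n,\qquad \|\epsilon^u_n\|=O(\delta_n).
\]

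Second, by the adapted-norm argument of \cite{CS2014} already invoked in Lemma \ref{lstab}, one may fix $\widetilde{\mu}>1$ so that $\|\Theta_n v\|\geq\widetilde{\mu}\|v\|$ for every $v\in E^u_n$; equivalently, $\Theta_n^{-1}$ contracts $E^u_{n+1}$ by the factor $\widetilde{\mu}^{-1}$. Solving the projected identity for $L^u_n$ and estimating gives
\[
\|L^u_n\|\leq\widetilde{\mu}^{-1}\|L^u_{n+1}\|+C\delta_n,
\]
and iterating this $N$ times produces
\[
\|L^u_n\|\leq\widetilde{\mu}^{-N}\|L^u_{n+N}\|+C\sum_{i=0}^{N-1}\widetilde{\mu}^{-i}\delta_{n+i}.
\]

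The main obstacle, and the essential contrast with the stable case, is that the recursion now pushes the controlled quantity $\|L^u_{n+N}\|$ off to infinity, so to pass to $N\to\infty$ I must first establish an a priori uniform bound $\sup_n\|L^u_n\|<\infty$. I would obtain this from Theorem \ref{main1}(1): after the affine change of variables $s=\tau_n(y)$ sending $[0,1]$ onto $I^{(n)}_\alpha$ one has
\[
L^n_\alpha=\int_0^1\ln\bigl(Z_{I^{(n)}_\alpha}(R^n f)\bigr)'(y)\,dy,
\]
and the $C^1$-approximation by the M\"obius map $F_n$ of (\ref{Fn}), together with the fact that the $k$-bounded combinatorics keeps the slopes $m_n$ inside a fixed compact subset of $(0,\infty)$ (the real-bounds principle), forces the integrand to be uniformly bounded. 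Projecting onto the uniformly transverse factor $E^u_n$ then preserves the bound on $\|L_n\|$.

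Once $\sup_n\|L^u_n\|<\infty$ is established, letting $N\to\infty$ kills the first term and yields
\[
\|L^u_n\|\leq C\sum_{i=0}^{\infty}\widetilde{\mu}^{-i}\delta_{n+i},
\]
which is exactly the sequence $x_n$ of Lemma \ref{extra} with $\lambda=\widetilde{\mu}^{-1}$ and $r_j=\delta_j$. Since $\{\delta_n\}\in l_2$, Lemma \ref{extra} gives $\{\|L^u_n\|\}\in l_2$, which is the claim.
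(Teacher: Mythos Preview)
Your argument is correct and follows essentially the same route as the paper: project the pseudo-orbit relation onto $E^u_n$, use the adapted norm to get a backward contraction, iterate, invoke the uniform bound on $\|L^u_n\|$, and finish with Lemma~\ref{extra}. The only cosmetic difference is that the paper iterates a \emph{finite} number of times (taking $k=n$, so the remainder term is $\widetilde{\mu}^{-n}\|L^u_{2n}\|$) and lands on the sequence $y_n$ of Lemma~\ref{extra}, whereas you pass to $N\to\infty$ and land on $x_n$; since $y_n\le x_n$ these are equivalent. Your explicit justification of $\sup_n\|L^u_n\|<\infty$ via Theorem~\ref{main1} is more detailed than the paper's, which simply asserts the bound.
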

\begin{proof} The proof is similar to Lemma \ref{lstab} and we use the adapted norm again. For all $n\geq 0$ we have that
$$
\|L^{u}_{n+1}\|\geq \widetilde{\mu}\|L^{u}_{n}\|-C\cdot \delta_{n}.
$$
Applying this estimate $k$ times, we obtain
$$
\|L^{u}_{n+k}\|\geq \widetilde{\mu}^{k}\|L^{u}_{n}\|-C\cdot \sum\limits_{j=0}^{k-1}\widetilde{\mu}^{j}\delta_{n+k-i-1},
$$
and therefore
$$
\|L^{u}_{n}\|\leq \frac{1}{\widetilde{\mu}^{k}}\|L^{u}_{n+k}\|+C\cdot \sum\limits_{j=0}^{k-1}\widetilde{\mu}^{j-k}\delta_{n+k-j-1},
$$
Taking $n=k$, we have
$$
\|L^{u}_{n}\|\leq \frac{1}{\widetilde{\mu}^{n}}\|L^{u}_{2n}\|+C\cdot \sum\limits_{j=0}^{n-1}\widetilde{\mu}^{j-n}\delta_{2n-j-1}.
$$
Since the sequence $\{L^{u}_{2n}\}$ is uniformly bounded and $\widetilde{\mu}>1$, by the Lemma \ref{extra} we get the claim.
\end{proof}

Define $\widetilde{\omega}_{n}:=\Theta_{0}^{-1}\Theta_{1}^{-1}\cdot\cdot\cdot\Theta_{n-1}^{-1}(L^{c}_{n})\in E^{c}_{0, \infty}$. By Proposition \ref{quasi} and Proposition \ref{pseudo-orbit} imply that $\|\widetilde{\omega}_{n+1}-\widetilde{\omega}_{n}\|=O(\delta_{n})$ with $\{\delta_{n}\}\in l_2$. Therefore $\{\widetilde{\omega}_{n}\}$ converges.

\begin{lemm}\label{limvector} Let \, $\omega=\lim\limits_{n\rightarrow\infty}\widetilde{\omega}_{n}\in E^{c}_{0, \infty}$ and $\omega^{n}\in E^{c}_{0, \infty}$ be the orbit of $\omega$ by Rauzy-Veech cocycle, that is, $\omega^{n}=\Theta_{0}\cdot\cdot\cdot\Theta_{n-1}\omega$. Then $\|\omega^{n}-L_{n}\|=O(\delta_{n})$ with $\{\delta_{n}\}\in l_2$.
\end{lemm}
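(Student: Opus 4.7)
The plan is to split $L_n = L^s_n + L^c_n + L^u_n$ along the decomposition $\mathbb{R}^{\mathcal A} = E^s_n \oplus E^c_{n,\infty} \oplus E^u_n$ and reduce everything to a control of the central piece. Since Lemmas \ref{lstab} and \ref{lunstab} already supply $\|L^s_n\| + \|L^u_n\| = O(\delta_n)$ with $\{\delta_n\} \in l_2$, and since $\omega^n\in E^c_{n,\infty}$ by the invariance in Proposition \ref{quasi}(1), the triangle inequality reduces the lemma to proving $\|\omega^n - L^c_n\| = O(\delta_n)$ with $\{\delta_n\}\in l_2$.

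To obtain this estimate I would first derive a closed formula. Projecting the pseudo-orbit relation of Proposition \ref{pseudo-orbit} onto $E^c_{n+1,\infty}$ gives the central recurrence
$$L^c_{n+1} = \Theta_n L^c_n + P^c_{n+1}\overrightarrow{\epsilon}_n, \qquad \|P^c_{n+1}\overrightarrow{\epsilon}_n\| = O(\delta_n),$$
where $P^c_{n+1}$ is the projection onto $E^c_{n+1,\infty}$ along $E^s_{n+1}\oplus E^u_{n+1}$. Using $\widetilde\omega_k = (\Theta_{k-1}\cdots\Theta_0)^{-1}L^c_k$, a short computation gives $\widetilde\omega_{k+1} - \widetilde\omega_k = (\Theta_k\cdots\Theta_0)^{-1}P^c_{k+1}\overrightarrow{\epsilon}_k$, and telescoping the convergent series $\omega - \widetilde\omega_n = \sum_{k\geq n}(\widetilde\omega_{k+1}-\widetilde\omega_k)$ together with the action of $\Theta_{n-1}\cdots\Theta_0$ yields
$$\omega^n - L^c_n = \sum_{k=n}^{\infty}(\Theta_k\cdots\Theta_n)^{-1}P^c_{k+1}\overrightarrow{\epsilon}_k.$$
By Proposition \ref{quasi}(2) each summand has norm at most $C\delta_k$.

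The final step converts this pointwise estimate into an $l_2$ bound through Lemma \ref{extra}. In the hyperbolic directions of Lemmas \ref{lstab}--\ref{lunstab} the cocycle provided a geometric kernel $\widetilde\mu^{-|i|}$ to which Lemma \ref{extra} applied directly; in the central direction there is only quasi-isometry, so one cannot just $l_2$-control the tail $\sum_{k\geq n}\delta_k$. I would exploit the refined form $\delta_n = O(\lambda^{\sqrt n}+\eta_n)$ coming from Theorem \ref{main1}: the sub-geometric piece $\lambda^{\sqrt n}$ has an $l_1$ tail and is therefore harmless, while for the $\eta_n\in l_2$ part one splits the sum dyadically into $n \leq k < 2n$ and $k \geq 2n$. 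On the near range one recognises a $y$-type expression of Lemma \ref{extra} (using the adapted norm from Propositions \ref{unfhyper}--\ref{cdsub}), and on the far range the quasi-isometric bound is combined with the equicontinuity of $\Psi_{j,\infty}$ from Proposition \ref{eqcon} to control $\|\omega-\widetilde\omega_{2n}\|$ by an $l_2$ sequence.

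The main obstacle is precisely this last step. Since the central direction has no uniform contraction, Lemma \ref{extra} does not apply in its pure form to the tail sum $\sum_{k\geq n}\delta_k$, and in general the tail of an $l_2$ sequence need not itself be $l_2$. The argument therefore hinges on combining the explicit $\lambda^{\sqrt n}+\eta_n$ decomposition of $\delta_n$ with a dyadic splitting that brings the central estimate within the scope of Lemma \ref{extra}.
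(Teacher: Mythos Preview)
Your argument coincides with the paper's through the essential steps: the same splitting $L_n=L^s_n+L^c_n+L^u_n$, the same reduction via Lemmas~\ref{lstab}--\ref{lunstab} to $\|\omega^n-L^c_n\|$, and the same use of the quasi-isometry in Proposition~\ref{quasi}. The paper's proof is in fact much shorter than yours: after the reduction it writes in one line
\[
\|\omega^n-L^c_n\|\le \|\Theta_0\cdots\Theta_{n-1}\restriction_{E^c_{0,\infty}}\|\cdot\|\omega-\widetilde\omega_n\|\le C\Bigl(1+\tfrac{1}{\epsilon_0}\Bigr)\|\omega-\widetilde\omega_n\|=O(\delta_n),
\]
and stops. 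There is no telescoped series, no appeal to Lemma~\ref{extra}, and no dyadic splitting; the paper simply treats $\|\omega-\widetilde\omega_n\|=O(\delta_n)$ as immediate from the preceding line $\|\widetilde\omega_{n+1}-\widetilde\omega_n\|=O(\delta_n)$. Your telescoped formula $\omega^n-L^c_n=\sum_{k\ge n}(\Theta_k\cdots\Theta_n)^{-1}P^c_{k+1}\overrightarrow{\epsilon}_k$ is just the same quantity written out, so up to this point you are reproducing the paper's proof in more explicit form.

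Where you diverge is in the final paragraph. You correctly observe that the tail of an $l_2$ sequence need not be $l_2$, a point the paper does not address. However, your proposed repair does not work as written. In the central direction the cocycle is only quasi-isometric (Proposition~\ref{quasi}(2)), so there is no geometric weight $\lambda^j$ available on the near range $n\le k<2n$; without such a weight the sum is not of the $y$-type in Lemma~\ref{extra}, and invoking ``the adapted norm from Propositions~\ref{unfhyper}--\ref{cdsub}'' does not help, since those adapted norms pertain to the hyperbolic directions, not to $E^c_{n,\infty}$. Your far-range step is circular: bounding $\|\omega-\widetilde\omega_{2n}\|$ by an $l_2$ sequence is exactly the claim under discussion, just at index $2n$. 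So the obstacle you identify is real, but the mechanism you propose to overcome it does not close the gap; on this point you and the paper are in the same position, except that the paper does not flag the issue.
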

\begin{proof} By Lemmas \ref{lstab} and \ref{lunstab}, it is sufficient to estimate $\omega^{n}-L^{c}_{n}$:
$$
\|\omega^{n}-L^{c}_{n}\|=\|\Theta_{0}\cdot\cdot\cdot\Theta_{n-1}\omega-L^{c}_{n}\|\leq
$$
$$
\leq\|\Theta_{0}\cdot\cdot\cdot\Theta_{n-1}|_{E^{c}_{0,\infty}}\|\cdot\|\omega-(\Theta_{0}\cdot\cdot\cdot\Theta_{n-1})^{-1}L^{c}_{n}\|\leq C(1+\frac{1}{\epsilon_{0}})\cdot\|\omega-\widetilde{\omega}_{n}\|=O(\delta_{n}).
$$
\end{proof}

Let $f_{A}$ be an affine i.e.m. Denote by $\zeta^{n}$ and $\widetilde{\zeta}^{n}$ the partition vectors of $R^{n}(f)$ and $R^{n}(f_{A})$, respectively.

\begin{lemm}\label{difpartition} Let $f\in \mathcal{B}^{KO}_{\star}$ and $f_{A}$ be affine model. Then \, $|\zeta^{n}-\widetilde{\zeta}^{n}|=O(\delta_{n})$ with $\{\delta_{n}\}\in l_2$.
\end{lemm}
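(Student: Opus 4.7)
The strategy parallels the proofs of Lemmas \ref{lstab}, \ref{lunstab}, and \ref{limvector}, but applied to length vectors rather than log-derivative vectors. First I would derive a pseudo-orbit recursion for $d^{n} := \zeta^{n} - \widetilde{\zeta}^{n}$. At Rauzy--Veech step $n$ of type $\varepsilon$, the g.i.e.m. $f$ satisfies
\begin{equation*}
\zeta^{n+1}_{\alpha(\varepsilon)} = \zeta^n_{\alpha(\varepsilon)} - \int_{I^{(n)}_{\alpha(1-\varepsilon)}} Df_n(s)\,ds, \qquad \zeta^{n+1}_{\alpha} = \zeta^n_\alpha \text{ for } \alpha \neq \alpha(\varepsilon),
\end{equation*}
while for the affine model the corresponding image length is exactly $e^{\omega^n_{\alpha(1-\varepsilon)}} \widetilde{\zeta}^n_{\alpha(1-\varepsilon)}$. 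By Theorem \ref{main1}(2), $Df_n$ in the renormalized coordinate is $\delta_n$-close to $1$ in $C^1$, so the arithmetic mean of $Df_n$ on $I^{(n)}_{\alpha(1-\varepsilon)}$ differs from $e^{L^n_{\alpha(1-\varepsilon)}}$ by $O(\delta_n)$; combined with Lemma \ref{limvector} this gives
\begin{equation*}
\int_{I^{(n)}_{\alpha(1-\varepsilon)}} Df_n(s)\,ds = e^{\omega^n_{\alpha(1-\varepsilon)}} \zeta^n_{\alpha(1-\varepsilon)} + O(\delta_n),
\end{equation*}
and subtracting the two recursions yields $d^{n+1} = M_n d^n + \overrightarrow{\rho}_n$ with $\|\overrightarrow{\rho}_n\| = O(\delta_n)$, $\{\delta_n\}\in l_2$, where $M_n$ is the length cocycle of the affine family.

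Next I would transfer the hyperbolic splitting of the Rauzy--Veech cocycle $\Theta_n$ to $\{M_n\}$. A direct matrix computation shows $M_n = D_n (\Theta_n^\ast)^{-1} D_n^{-1}$, where $D_n$ is a diagonal matrix with entries of the form $e^{\pm\omega^n_\beta}$ of uniformly bounded modulus (boundedness of $\omega^n$ follows from Lemma \ref{limvector} together with Proposition \ref{quasi}, since $\omega$ lies in the central direction on which $\Theta$ acts as a quasi-isometry). Hence the exponential expansion/contraction rates of $M_n$ coincide with those of $(\Theta_n^\ast)^{-1}$, and through the duality (\ref{thetaomega}) the splitting $E^s_n \oplus E^c_{n,\infty} \oplus E^u_n$ for $\Theta_n$ induces an analogous $M_n$-invariant splitting of $\mathbb{R}^{\mathcal{A}}$. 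Decompose $d^n = d^{s}_n + d^{c}_n + d^{u}_n$ accordingly.

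Each component is then controlled as in Lemmas \ref{lstab}--\ref{limvector}. The stable bound $\|d^{s}_n\| = O(\delta_n)$, $\{\delta_n\}\in l_2$, follows from the adapted-norm contraction and Lemma \ref{extra}. The unstable bound $\|d^{u}_n\| = O(\delta_n)$ follows exactly as in Lemma \ref{lunstab}: the confinement $\zeta^n, \widetilde\zeta^n \in [0,1]^{\mathcal{A}}$ forces $\{d^{u}_{2n}\}$ to be uniformly bounded, and then the expansion estimate is inverted using Lemma \ref{extra}. For the central component, the sequence $M_0^{-1}\cdots M_{n-1}^{-1} d^{c}_n$ is Cauchy by the pseudo-orbit relation and Proposition \ref{quasi}, hence converges to some limit $v_\infty$; since the initial partition $\widetilde\zeta^0$ of $f_A$ is a free parameter in the construction of the affine model, I would choose it so that the central component of $\zeta^0 - \widetilde\zeta^0$ equals $v_\infty$, killing the central obstruction and yielding $\|d^{c}_n\| = O(\delta_n)$. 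The main obstacle, I expect, is the first step: producing the $O(\delta_n)$ error bound when passing from the arithmetic mean of $Df_n$ on $I^{(n)}_{\alpha(1-\varepsilon)}$ to $e^{L^n_{\alpha(1-\varepsilon)}}$, which requires simultaneously exploiting the $C^1$-identity approximation and the $L^1$-control of $D^2 f_n$ in Theorem \ref{main1}(2).
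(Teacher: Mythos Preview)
The paper does not supply a proof of this lemma; it states only that the argument is ``similar as analogues ones in \cite{CS2014}.'' So there is no explicit proof in the paper to compare against, and your proposal must stand on its own merits.

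Your overall shape --- a pseudo-orbit recursion for $d^n = \zeta^n - \widetilde\zeta^n$ followed by a stable/central/unstable decomposition for the length cocycle $M_n$, with the stable and unstable pieces handled as in Lemmas~\ref{lstab} and~\ref{lunstab} --- is reasonable, and those two pieces go through. The gap is in the central piece.

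You claim that ``the initial partition $\widetilde\zeta^0$ of $f_A$ is a free parameter in the construction of the affine model'' and propose to tune it so that the central component of $\zeta^0 - \widetilde\zeta^0$ equals the limit $v_\infty$, thereby killing the obstruction. This is not correct: by Lemma~\ref{uniqaffine}, once the slope vector $\omega$ and the combinatorics $\{\pi^i,\varepsilon^i\}$ are fixed, the affine i.e.m.\ $f_A$ on $[0,1]$ is \emph{unique}, and in particular its partition vector $\widetilde\zeta^0$ is completely determined. The only flexibility in the weak affine model (see the Remark following Lemma~\ref{uniqaffine}) is adding an element of the one-dimensional space $E^s_0$ to the \emph{slope} vector; the dependence $\omega \mapsto \widetilde\zeta^0$ is nonlinear, and in any case one free dimension cannot be used to prescribe a $(d-2)$-dimensional central component in length space. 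So the mechanism you invoke to force $v_\infty = 0$ is unavailable.

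A correct argument must obtain the central decay differently. One natural route --- and the one consistent with the machinery in \cite{CS2014} --- is to bypass the linear splitting for lengths altogether and use that the normalized length vectors $\zeta^n$, $\widetilde\zeta^n$ lie in the positive simplex, on which the projectivized length cocycle is a uniform contraction (e.g.\ in the Hilbert projective metric) for $k$-bounded combinatorics; two pseudo-orbits of a contraction with errors in $l_2$ are automatically $l_2$-close, with no central direction to isolate. If you wish to keep your linear-splitting approach, you must instead \emph{prove} that the specific $f_A$ furnished by Lemma~\ref{uniqaffine} from the slope vector of Lemma~\ref{limvector} automatically has $v_\infty = 0$; that is a separate nontrivial statement, not something you can arrange by choice.
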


\begin{lemm}\label{uniqaffine}(see, \cite{MMY2010}) Let $\omega$ be a slope vector. Then there exists unique affine i.e.m. $f_{A}$ with domain $[0, 1]$, whose combinatorics is $\{\pi^{i}, \varepsilon^{i}\}_{i\in N}$ and the slope vector is $\omega$.
\end{lemm}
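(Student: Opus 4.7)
My plan is to construct $f_A$ by showing that, once the slope vector $\omega$ is fixed, the partition vector of the domain is uniquely determined by the requirement that $f_A$ realize the prescribed combinatorics $\{\pi^i,\varepsilon^i\}_{i\in\mathbb{N}}$. An affine i.e.m.\ on $[0,1]$ is encoded by the triple $(\pi^0,\lambda,\omega)$; for $f_A$ to be a bijection of $[0,1]$ onto itself one imposes the two linear normalizations $\sum_\alpha\lambda_\alpha=1$ and $\sum_\alpha e^{\omega_\alpha}\lambda_\alpha=1$, which together cut out a codimension-two affine subspace $\mathcal{H}(\omega)\subset\mathbb{R}^{\mathcal{A}}$ in which $\lambda$ must lie.

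Given $\omega$, the deeper slope vectors are forced by the cocycle, $\omega^n=\Theta_{n-1}\cdots\Theta_0\omega$. The partition vectors $\lambda^n$ satisfy an $\omega$-dependent linear induction that subtracts the loser's image length from the winner at each step (for example, at a type-$0$ step, $\lambda^{n+1}_{\alpha^n(0)}=\lambda^n_{\alpha^n(0)}-e^{\omega^n_{\alpha^n(1)}}\lambda^n_{\alpha^n(1)}$, with the other coordinates unchanged). Realizing the prescribed combinatorics for all $n$ amounts to demanding that every $\lambda^n$ lie in $\mathbb{R}^{\mathcal{A}}_{+}$, which defines a nested family of open convex cones $\mathcal{C}_0\supset\mathcal{C}_1\supset\cdots$, so that the sought $f_A$ corresponds to a vector in $\bigl(\bigcap_n\mathcal{C}_n\bigr)\cap\mathcal{H}(\omega)$.

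To conclude, I would invoke Proposition \ref{unfhyper}: under $k$-bounded combinatorics the Rauzy--Veech cocycle is uniformly hyperbolic inside $\mathrm{Im}\,\Omega_{\pi^0}$, and the dual action on length vectors is correspondingly contracting onto a one-dimensional stable ray. The $\omega$-dependent modification above is a bounded perturbation of the dual cocycle that preserves the hyperbolic cone splitting, so $\bigcap_n\mathcal{C}_n$ reduces to a single projective ray, whose intersection with $\mathcal{H}(\omega)$ is exactly one point, giving both existence and uniqueness. Existence can alternatively be obtained as the limit of the partition vectors of the periodic approximants built from $\gamma_n(f)$, whose convergence is controlled by Proposition \ref{quasi}.

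The main obstacle is verifying that the $\omega$-dependent perturbation of the length cocycle does not destroy the uniform hyperbolicity afforded by Proposition \ref{unfhyper}: one must check that the invariant cone fields persist, that the contraction rates remain bounded away from $1$ for any fixed $\omega$, and that the resulting limiting ray is transverse to $\mathcal{H}(\omega)$. These are precisely the technical points worked out in \cite{MMY2010}, to which we appeal.
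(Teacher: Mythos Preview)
The paper does not supply its own proof of this lemma: it simply records the statement with a citation to \cite{MMY2010} and moves on. Your sketch is therefore not to be compared against any argument in the present paper, but rather against the Marmi--Moussa--Yoccoz construction that the authors invoke.

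What you outline is indeed the strategy of \cite{MMY2010}: encode an affine i.e.m.\ by $(\pi^0,\lambda,\omega)$, observe that the two normalizations cut out an affine hyperplane of codimension two, track the length vector under the $\omega$-twisted Rauzy--Veech induction, and show that the positivity constraints intersect down to a single projective ray. You have correctly identified the one nontrivial point, namely that the length induction is no longer the transpose-inverse of the standard cocycle $\Theta^{-1\ast}$ but an $\omega$-dependent affine modification of it, and that one must check that the cone contraction survives this perturbation. That verification is exactly what \cite{MMY2010} carries out, and you rightly defer to it.

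Two small remarks. First, your invocation of Proposition~\ref{unfhyper} is slightly misaligned: that proposition concerns the action of $\Theta$ on $\mathrm{Im}\,\Omega_\pi$, whereas the length vectors live in the full $\mathbb{R}^{\mathcal{A}}_+$ and evolve under the dual cocycle; the relevant contraction is of the projective action on the positive cone, which in \cite{MMY2010} is handled via the Hilbert metric rather than via the splitting in Proposition~\ref{unfhyper}. Second, the alternative existence argument via the periodic approximants $\gamma_n(f)$ and Proposition~\ref{quasi} is plausible but would need an additional compactness step, since Proposition~\ref{quasi} controls the central direction of $\Theta$, not directly the length vectors. Neither point is a genuine gap given that you ultimately appeal to \cite{MMY2010}, which is precisely what the paper itself does.
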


\begin{rem} For each $\omega$ that is a sum of the vector given by Lemma \ref{limvector} and a vector in $E^{s}_{0}$ we
constructed the unique affine interval exchange map $f_{A}$ given by Lemma \ref{uniqaffine}. Each one of
these affine interval exchange maps is called a \textit{weak affine model} of $f$. Note also that the weak affine model is not unique.
\end{rem}

From now on we assume without loss of generality that $f$ has only one discontinuity which will be denoted by $\partial{I_{\alpha^{\star}}}$. The following Lemma states that $f$ and $f_{A}$ have $d-2$ identical breaks.

\begin{lemm}\label{breakaffine} Suppose that $f\in \mathcal{B}^{KO}_{\star}$. Let $f_{A}$ be a week affine model of $f$. Then $BP_{f}(\partial{I_{\alpha^{\star}}})=BP_{f_{A}}(\partial{\widetilde{I}_{\alpha^{\star}}})$ for all $\alpha\in \mathcal{A}$ such that $\alpha\neq \alpha^{\star}$ and $\pi_{0}>1$.
\end{lemm}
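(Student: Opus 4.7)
The plan is to compute both sides of the claimed equality via the renormalization dynamics and then match them using the asymptotic identity $L^n=\omega^n+O(\delta_n)$ from Lemma \ref{limvector}. Since $f_A$ is piecewise affine with slope $e^{\omega^0_\gamma}$ on $\widetilde I_\gamma$, the break at the left endpoint of $\widetilde I_\alpha$ (for $\pi_0(\alpha)>1$) is immediately
$$BP_{f_A}(\partial\widetilde I_\alpha)=\omega^0_{\beta(\alpha)}-\omega^0_\alpha,\qquad \beta(\alpha):=\pi_0^{-1}(\pi_0(\alpha)-1).$$
Under renormalization the slope vector evolves by the Rauzy--Veech cocycle, $\omega^n=\Theta_{n-1}\cdots\Theta_0\,\omega^0$, and $R^n(f_A)$ has constant log-slope $\omega^n_\gamma$ on $I^{(n)}_\gamma$. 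Hence the break of $R^n(f_A)$ at the renormalized partition point corresponding to $\partial\widetilde I_\alpha$ equals $\omega^n_{\gamma_1}-\omega^n_{\gamma_2}$ for the two adjacent labels $\gamma_1,\gamma_2$, and by the chain rule this number is independent of $n$.

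For $f$, the key is that the no-connection condition together with $\alpha\neq\alpha^\star$ and $\pi_0(\alpha)>1$ forces the whole forward orbit $\{f^k(\partial I_\alpha)\}_{k\ge 1}$ to avoid every $\partial I_\beta$ with $\pi_0(\beta)>1$, in particular the unique discontinuity $\partial I_{\alpha^\star}$. Consequently $f$, and hence $Df$, is smooth along this orbit, and the chain rule yields
$$BP_f(\partial I_\alpha)=\log Df^k\bigl((\partial I_\alpha)^-\bigr)-\log Df^k\bigl((\partial I_\alpha)^+\bigr)$$
for every $k\geq 1$. Choosing $k=q^n_{\gamma_2}$ so that $f^k$ coincides with $R^n(f)$ on one side of $\partial I_\alpha$ (and the symmetric choice on the other side), I would express the right-hand side as the jump of $\log DR^n(f)$ at the corresponding partition point of $\mathcal P^{(n)}$. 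Theorem \ref{main1} then reduces this jump, up to an error of order $\delta_n$, to the difference $L^n_{\gamma_1}-L^n_{\gamma_2}$ of the average log-derivatives of $R^n(f)$ on the two adjacent pieces; here one uses that $R^n(f)$ converges in $C^1$ to the M\"obius map $F_n$, whose log-derivative at the endpoints of each fundamental segment agrees with $L^n_\gamma$ up to $o(1)$. Combining with Lemma \ref{limvector} gives $BP_f(\partial I_\alpha)=\omega^n_{\gamma_1}-\omega^n_{\gamma_2}+o(1)$ as $n\to\infty$, and the right-hand side equals $BP_{f_A}(\partial\widetilde I_\alpha)$ by the computation of the previous paragraph.

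The hardest step will be the passage, for $R^n(f)$, from the integral average $L^n_\gamma$ to the pointwise log-derivative at the endpoint of $I^{(n)}_\gamma$, with a vanishing error. Because the smoothness here is only of Katznelson--Ornstein type, i.e.\ $(\log Df)'\in L_p$ for some $p>1$, one cannot rely on uniform continuity of $\log Df$; the estimate must instead be extracted from the $L_1$-control $\|Z_{I^{(n)}_\alpha}(D^2R^n(f))-D^2F_n\|_{L_1}\le\delta_n$ given by Theorem \ref{main1}, combined either with the adapted cross-ratio distortion bounds developed in \cite{CDB2017} or with a Lebesgue-differentiation argument applied to $f''/f'$. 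Once that quantitative comparison is in place the matching of breaks follows automatically from the construction of the weak affine model, the remaining errors being absorbed by the $\ell_2$-sequence $\{\delta_n\}$ that already controls every other estimate in the paper.
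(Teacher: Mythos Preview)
The paper does not prove this lemma; it only says the argument is ``similar as analogues ones in \cite{CS2014}''. Your outline correctly isolates the analytic core of that argument---reading the break of $R^n(f_A)$ as a coordinate difference of $\omega^n$, matching it with the corresponding difference of $L^n$ via Lemma~\ref{limvector}, and controlling the passage from the integral average $L^n_\gamma$ to the endpoint value of $\log DR^n(f)$ through the $L_1$-bound of Theorem~\ref{main1}. That last step is indeed the delicate one in the KO class, and you point to the right tools.

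The gap is the combinatorial bridge in your step~4. The original point $\partial I_\alpha$ is in general \emph{not} a partition point of $\mathcal P^{(n)}$, and for large $n$ it need not even lie in $I^{(n)}$; so there is no way to ``choose $k=q^n_{\gamma_2}$ so that $f^k$ coincides with $R^n(f)$ on one side of $\partial I_\alpha$''. What the Cunha--Smania argument actually uses is a one-step combinatorial lemma, proved by direct inspection of the two Rauzy--Veech cases, saying that the multiset of break values of $R(f)$ at its $d-2$ interior non-discontinuity partition points coincides with that of $f$, via an explicit bijection of labels. Iterating yields
\[
BP_{R^n(f)}\bigl(\partial I^{(n)}_{\sigma_n(\alpha)}\bigr)=BP_f(\partial I_\alpha),
\]
and the \emph{same} bijection $\sigma_n$ applies to $f_A$, since the combinatorics agree. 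One then works entirely at level $n$: compare the break of $R^n(f)$ at $\partial I^{(n)}_\gamma$ with $L^n_{\beta_n(\gamma)}-L^n_\gamma$, pass to $\omega^n_{\beta_n(\gamma)}-\omega^n_\gamma=BP_{R^n(f_A)}(\partial\widetilde I^{(n)}_\gamma)$, and let $n\to\infty$. Your invariance-under-iteration observation is the right instinct, but it must be implemented by tracking the break through the renormalized partition points $\partial I^{(n)}_\gamma$, not by anchoring at the fixed point $\partial I_\alpha$.
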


The proofs of the Lemmas \ref{difpartition} and \ref{breakaffine} are similar as analogues ones in \cite{CS2014}.

\begin{lemm}\label{distancepartition} Let $f\in \mathcal{B}^{KO}_{\star}$ and $f_{A}$ be a week affine model of $f$. Then $\|R^{n}f(I^{n}_{\alpha})-R^{n}f_{A}(\widetilde{I}^{n}_{\alpha})\|_{C^{1+L_{1}}}=O(\delta_{n})$ for all $\alpha\in \mathcal{A}$.
\end{lemm}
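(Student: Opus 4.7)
The plan is to combine the near-identity approximation of $Z_{I^n_\alpha}(R^n f)$ provided by Theorem \ref{main1}(2) with the exact affine structure of $R^n f_A|_{\widetilde{I}^n_\alpha}$, and then absorb the remaining discrepancies using Lemmas \ref{limvector}, \ref{difpartition}, and \ref{breakaffine}.

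First, since $f\in\mathbb{B}^{KO}_{\star}$, Theorem \ref{main1}(2) applied to $R^n f$ on $I^n_\alpha$ yields
$$\|Z_{I^n_\alpha}(R^n f)-\mathrm{Id}\|_{C^1([0,1])}=O(\delta_n),\qquad \|Z_{I^n_\alpha}(D^2 R^n f)\|_{L_1([0,1])}=O(\delta_n).$$
The $C^1$ bound gives in particular that the mean slope of $R^n f$ on $I^n_\alpha$ equals $1+O(\delta_n)$, and combining this with the $L_1$ bound on $D^2 R^n f$ by a Jensen-type argument forces $L^n_\alpha=O(\delta_n)$. By Lemma \ref{limvector}, then $\omega^n_\alpha=L^n_\alpha+O(\delta_n)=O(\delta_n)$, so the affine map $R^n f_A|_{\widetilde{I}^n_\alpha}$ has slope $e^{\omega^n_\alpha}=1+O(\delta_n)$ and vanishing second derivative.

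Next I would pass both maps to a common coordinate system by affinely rescaling $I^n_\alpha$ and $\widetilde{I}^n_\alpha$ to $[0,1]$. By Lemma \ref{difpartition} the partition vectors $\zeta^n$ and $\widetilde{\zeta}^n$ differ by $O(\delta_n)$, so the two rescalings themselves differ by $O(\delta_n)$, and Lemma \ref{breakaffine} guarantees that the breaks of $f$ and $f_A$ at the endpoints of the intervals being compared match. In this common coordinate, $R^n f$ is $O(\delta_n)$-close to the identity while $R^n f_A$ is $O(\delta_n)$-close to an affine map of slope $1+O(\delta_n)$, which gives $C^1$ closeness; and since $D^2 R^n f_A\equiv 0$, the $L_1$ comparison of second derivatives reduces to the second estimate in Theorem \ref{main1}(2), itself $O(\delta_n)$.

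I expect the main obstacle to lie in tracking how the $C^{1+L_1}$ norm transforms under these affine rescalings, which themselves carry $O(\delta_n)$ uncertainty from Lemma \ref{difpartition}. One must verify that matching the endpoints of $I^n_\alpha$ and $\widetilde{I}^n_\alpha$, i.e.\ the constant in the affine comparison, is controlled by the partition estimate; that the chain rule applied to second derivatives under rescaling does not amplify errors beyond $O(\delta_n)$; and that the small mismatch region where $I^n_\alpha$ and $\widetilde{I}^n_\alpha$ disagree cannot produce an uncontrolled contribution to the $L_1$ norm of $D^2(R^n f - R^n f_A)$. Each of these is routine given the uniform $L_1$-nonlinearity bound of Theorem \ref{main1}(2) combined with the algebra of affine changes of variables, but it is in this bookkeeping that the genuine technical content of the lemma sits.
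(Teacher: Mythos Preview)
Your reading of the lemma is off. Despite the $C^{1+L_1}$ notation in the statement, the paper explains immediately afterward that Lemma~\ref{distancepartition} ``estimates the distance between the image partition vector of $R^{n}(f)$ and $R^{n}(f_{A})$,'' and that its proof ``follows from Lemma~\ref{limvector} and Lemma~\ref{difpartition}.'' In other words, the assertion is simply that the intervals $R^{n}f(I^{n}_{\alpha})$ and $R^{n}f_{A}(\widetilde{I}^{n}_{\alpha})$ are $O(\delta_n)$-close: since $|R^{n}f(I^{n}_{\alpha})|\approx e^{L^{n}_{\alpha}}|I^{n}_{\alpha}|$ and $|R^{n}f_{A}(\widetilde{I}^{n}_{\alpha})|=e^{\omega^{n}_{\alpha}}|\widetilde{I}^{n}_{\alpha}|$, this is immediate from $|I^{n}_{\alpha}-\widetilde{I}^{n}_{\alpha}|=O(\delta_n)$ (Lemma~\ref{difpartition}) and $|L^{n}_{\alpha}-\omega^{n}_{\alpha}|=O(\delta_n)$ (Lemma~\ref{limvector}). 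Lemma~\ref{breakaffine} plays no role here.

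More importantly, your argument contains a genuine error, not just an over-elaboration. You claim that the $C^1$ bound in Theorem~\ref{main1}(2) forces the mean slope of $R^n f$ on $I^n_\alpha$ to be $1+O(\delta_n)$, and hence $L^n_\alpha=O(\delta_n)$. This is false. The zoom $Z_{I^n_\alpha}$ rescales domain and image \emph{separately} to $[0,1]$ (otherwise the comparison with $F_n\colon[0,1]\to[0,1]$ or $\mathrm{Id}\colon[0,1]\to[0,1]$ would make no sense), so $\|Z_{I^n_\alpha}(R^n f)-\mathrm{Id}\|_{C^1}=O(\delta_n)$ says only that $D(R^n f)$ is approximately \emph{constant} on $I^n_\alpha$, with the constant equal to $|R^n f(I^n_\alpha)|/|I^n_\alpha|$; it says nothing about that constant being close to $1$. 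Indeed, the whole machinery of \S5 (Proposition~\ref{pseudo-orbit}, Lemmas~\ref{lstab}--\ref{limvector}) shows that $L^n$ is shadowed by a genuine orbit $\omega^n$ in the \emph{central} direction $E^c_{0,\infty}$, where the cocycle is a quasi-isometry (Proposition~\ref{quasi}); thus $\|L^n\|$ stays bounded away from zero whenever $f$ has nontrivial breaks. What you actually need---and what the paper uses---is not $L^n_\alpha=O(\delta_n)$ but $L^n_\alpha-\omega^n_\alpha=O(\delta_n)$, which is exactly Lemma~\ref{limvector}.
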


Lemma \ref{distancepartition} estimates the distance between the image partition vector of $R^{n}(f)$ and $R^{n}(f_{A})$. The proof of the Lemma \ref{distancepartition} follows from Lemma \ref{limvector} and Lemma \ref{difpartition}.

\textbf{Proof of Theorem \ref{main3}}. Note that by Theorem \ref{main1} we have
$$
\|Z_{I^{n}_{\alpha}}R^{n}f-Z_{\widetilde{I}^{n}_{\alpha}}R^{n}f_{A}\|_{C^{1+L_{1}}}=O(\delta_{n})\,\,\, \mbox{\rm for all}\,\, n\geq0\,\,\, \mbox{\rm and}\,\,\, \alpha\in \mathcal{A}\,\,\, \mbox{\rm with}\,\,\, \{\delta_{n}\}\in l_{2}.
$$
The last equality together with Lemma \ref{difpartition} and Lemma \ref{distancepartition} imply Theorem \ref{main3}.

\textbf{Proof of Theorem \ref{main4}}. For simplicity we assume $\mathcal{A}=\{1, 2,...,d\}$ and denote by $j_{0}\in \mathcal{A}$ the letter such that $\partial{I_{j_{0}}}$ is the discontinuity of $f_{A}$. Since $f$ and $g$ are break-equivalent, by Lemma \ref{breakaffine} we have
$$
\omega^{f}_{i+1}-\omega^{f}_{i}=\omega^{g}_{i+1}-\omega^{g}_{i},\,\,\,\, \mbox{\rm for every} \,\, i\in \mathcal{A}\,\,\,\ such that i\neq j_{0}-1,\,d,
$$
which is equivalent to
$$
\omega^{f}_{1}-\omega^{g}_{1}=\cdot\cdot\cdot=\omega^{f}_{j_{0}-1}-\omega^{g}_{j_{0}-1},\,\,\, \omega^{f}_{j_{0}}-\omega^{g}_{j_{0}}=\cdot\cdot\cdot=\omega^{f}_{d}-\omega^{g}_{d},
$$
where we choose $\omega^{f}=\{\omega^{f}_{i}\}_{i\in \mathcal{A}}\in E^{c}_{0, \infty}$ and $\omega^{g}=\{\omega^{g}_{i}\}_{i\in \mathcal{A}}\in E^{c}_{0, \infty}$ as the slope-vector of the week affine models $f_{A}$ and $g_{A}$, respectively.

Denote $v:=\omega^{f}_{1}-\omega^{g}_{1}$ and $\widetilde{v}:=\omega^{f}_{j_{0}}-\omega^{g}_{j_{0}}$. Then we have that
$$
\omega^{f}-\omega^{g}=(\omega^{f}_{i}-\omega^{g}_{i})_{i\in \mathcal{A}}=(v,...v, \underbrace{\widetilde{v}}\limits_{j_{0}\,\, \mbox{\rm position}},...,\widetilde{v})\in E^{c}_{0, \infty},
$$
that is, the vector $\omega^{f}-\omega^{g}$ can be viewed as the slope-vector of an affine interval exchange maps with two break intervals. So we have $(v,\widetilde{v})\in E^{c}_{0, \infty}(2)$, where $E^{c}_{0, \infty}(2)$ is the central space defined by renormalization of two intervals. Since $dimE^{c}_{0, \infty}(2)=0$ we have $v=\widetilde{v}=0$ and then $\omega^{f}=\omega^{g}$. By Proposition \ref{uniqaffine} we get $f_{A}=g_{A}$ as claimed.

\textbf{Proof of Theorem \ref{main5}}. Let $f$ and $g$ be as in the assumptions of Theorem \ref{main5}. Then by Theorem \ref{main4}, we have: $f_{A}=g_{A}$. Therefore,
$$
\|Z_{I^{n}_{\alpha}}R^{n}f-Z_{\widetilde{I}^{n}_{\alpha}}R^{n}g\|\leq \|Z_{I^{n}_{\alpha}}R^{n}f-Z_{\widetilde{I}^{n}_{\alpha}}R^{n}f_{A}\|+
\|Z_{I^{n}_{\alpha}}R^{n}g-Z_{\widetilde{I}^{n}_{\alpha}}R^{n}g_{A}\|=O(\delta_{n}).
$$
This completes the proof of Theorem \ref{main5}.

\end{document}